\documentclass{article}

\usepackage[ansinew, utf8]{inputenc}
\usepackage{amsmath, amssymb, amsthm}
\usepackage{multicol,latexsym, array}
\usepackage{subfigure}
\usepackage{paralist}
\usepackage{ dsfont }
\usepackage{color}
\usepackage{mathrsfs}
\usepackage{natbib}
\usepackage{bm}
\usepackage[colorlinks,citecolor=blue,urlcolor=blue]{hyperref}
\usepackage{graphicx}
\usepackage{tikz}
\usetikzlibrary{fit,shapes,decorations}

\usepackage{wrapfig}
\providecommand{\keywords}[1]{\textit{Keywords} #1}
\providecommand{\MSC}[1]{MSC \textit{subject classification} #1}

\newcommand{\R}{\mathbb{R}}
\newcommand{\N}{\mathbb{N}}
\newcommand{\Z}{\mathbb{Z}}

\newcommand{\one}{\mathds{1}}
\newcommand{\mean}{\mathds{E}}

\newcommand{\calD}{\mathcal{D}}

\newcommand{\calN}{\mathcal{N}}

\newcommand{\calP}{\mathcal{P}}
\newcommand{\calS}{\mathcal{S}}
\newcommand{\calT}{\mathcal{T}}
\newcommand{\calU}{\mathcal{U}}
\newcommand{\calX}{\mathcal{X}}
\newcommand{\calY}{\mathcal{Y}}

\newcommand{\bG}{\mathbf{G}}
\newcommand{\bH}{\mathbf{H}}

\newcommand{\bd}{\bm{d}}
\newcommand{\be}{\bm{e}}

\newcommand{\br}{\bm{r}}
\newcommand{\bs}{\bm{s}}
\newcommand{\bu}{\bm{u}}
\newcommand{\bv}{\bm{v}}
\newcommand{\bw}{\bm{w}}

\newcommand{\bz}{\bm{z}}
\newcommand{\bmu}{\bm{\mu}}
\newcommand{\bnu}{\bm{\nu}}
\newcommand{\blambda}{\bm{\lambda}}
\newcommand{\boeta}{\bm{\eta}}

\newcommand{\bnull}{\bm{0}}

\newcommand{\brh}{\hat{\mathbf{r}}}
\newcommand{\bsh}{\hat{\mathbf{s}}}

\newcommand{\set}[1]{\left\lbrace #1\right\rbrace }
\newcommand{\inter}[1]{int\left\lbrace #1\right\rbrace }
\newcommand{\defeq}{\mathrel{\mathop{:}}=}
\newcommand{\scalprod}[1]{\langle #1\rangle }
\newcommand{\abs}[1]{\left|  #1\right|  }
\newcommand{\norm}[1]{\|  #1\|  }
\newcommand{\normfp}[1]{\|  #1\|_{\ell^1(d^p)}  }
\newcommand{\normwinfty}[1]{\|  #1\|_{\ell^\infty(1/d^p)}  }
\newcommand{\weak}{\xrightarrow{\mathscr{D}}}

\newcommand{\Wt}{W^t}
\newcommand{\parent}{{\mathrm{parent}}}
\newcommand{\children}{{\mathrm{children}}}
\newcommand{\rootT}{{\mathrm{root}}}

\newcommand{\supp}{\mathrm{supp}}

\newtheorem{theorem}{Theorem}[section]
\newtheorem{definition}[theorem]{Definition}
\newtheorem{example}[theorem]{Example}

\newtheorem{lem}[theorem]{Lemma}
\newtheorem{rem}[theorem]{Remark}

\newcommand{\footremember}[2]{%
	\footnote{#2}
	\newcounter{#1}
	\setcounter{#1}{\value{footnote}}%
}
\newcommand{\footrecall}[1]{%
	\footnotemark[\value{#1}]%
}

\begin{document}

\author{Carla Tameling \footremember{ims}{\scriptsize Institute for Mathematical
		Stochastics, University of G\"ottingen,
		Goldschmidtstra{\ss}e 7, 37077 G\"ottingen} 
	\and 
	Max Sommerfeld \footrecall{ims}\footremember{fbms}{\scriptsize Felix Bernstein Institute for
		Mathematical Statistics in the Biosciences, University of G\"ottingen,
		Goldschmidtstra{\ss}e 7, 37077 G\"ottingen} 
	\and 
	Axel Munk \footrecall{ims}\footrecall{fbms} \footnote{\scriptsize Max Planck Institute for Biophysical
		Chemistry, Am Fa{\ss}berg 11, 37077 G\"ottingen}}

	\title{Empirical optimal transport on countable metric spaces: Distributional limits and statistical applications}

	\maketitle
	\begin{abstract}
	We derive distributional limits for empirical transport distances between
probability measures supported on countable sets. Our approach is based on
sensitivity analysis of optimal values of infinite dimensional mathematical
programs and a delta method for non-linear derivatives. A careful calibration of
the norm on the space of probability measures is needed in order to combine
differentiability and weak convergence of the underlying empirical process.
Based on this we provide a sufficient and necessary condition for the underlying
distribution on the countable metric space for such a distributional limit to
hold.
We give an explicit form of the limiting distribution for ultra-metric spaces.\\
Finally, we apply our findings to optimal transport based inference  in
large scale problems. An application to nanoscale microscopy is given.\\ 
	
	\end{abstract}
	
		\MSC{Primary 60F05, 60B12, 62E20; Secondary 90C08, 90C31, 62G10}
	
	\keywords{optimal transport, Wasserstein distance, empirical process, limit law, statistical testing}

\section{Introduction}
\label{sec:intro}

Optimal transport based distances between probability measures (see e.g., \cite{rachev_mass_1998} or
\cite{villani_optimal_2008} for a comprehensive treatment), e.g., the Wasserstein
distance \citep{vasershtein_markov_1969}, which is also known as Earth
Movers distance \citep{rubner_earth_2000}, Kantorovich-Rubinstein distance
\citep{kantorovich_space_1958} or Mallows distance \citep{mallows_note_1972},
are of fundamental interest in probability and statistics, with respect to both
theory and practice. 
The $p$-th Wasserstein distance (WD) between two probability measures $\mu$ and
$\nu$ on a Polish metric space $(\calX,d)$ is given by 
\begin{equation}
W_p(\mu,\nu) = \left( \inf_{\pi} \int_{\calX \times \calX} d(x,y)^p d\pi(x,y)\right) ^{1/p}
\label{eq:def_wasser}
\end{equation}
for $p\in[1,\infty)$, the infimum is taken over all probability measures $\pi$
  on the product space $\calX \times \calX$ with marginals $\mu$ and $\nu$. \\
The WD metrizes weak convergence of a sequence of probability measures on $(\calX,d)$ together with convergence of its first $p$ moments and has become a standard tool in probability, e.g., to
study limit laws (e.g.,
\cite{johnson_central_2005,rachev_rate_1994,shorack_empirical_1986}), to derive
bounds for Monte Carlo computation schemes such as MCMC (e.g.,
\cite{eberle_error_2014, rudolf_perturbation_2015}), for point process
approximations \citep{barbour_stein_1992,schuhmacher_stein_2009}, bootstrap
convergence \citep{bickel_asymptotic_1981} or to quantify measures of risk \citep{rachev_probability_2011}. 
Besides of its theoretical importance, the WD is used in many applications as an empirical measure to compare complex objects, e.g., in image retrieval
\citep{rubner_earth_2000}, deformation analysis \citep{panaretos_amplitude_2016},
meta genomics \citep{evans_phylogenetic_2012}, computer vision
\citep{ni_local_2009}, goodness-of-fit testing
\citep{Munk1998,barrio_contributions_2000} and machine learning \citep{Rolet2016}.\\

In such applications the WD has to be estimated from a finite sample of
the underlying measures.
This raises the question how fast the \emph{empirical} Wasserstein distance (EWD), i.e., when either $\mu$ or $\nu$ (or both) are estimated by the empirical measures $\boldmath{\hat{\mu}_n} = \frac{1}{n}\sum_{i=1}^{n}\delta_{X_i}$ (and $\boldmath{\hat{\nu}_m} = \frac{1}{m}\sum_{i=1}^{m}\delta_{Y_i}$) approaches WD. 
\cite{ajtai_optimal_1984} investigated the rate of convergence of EWD for the uniform measure on the unit square, \cite{talagrand_matching_1992} and \cite{talagrand_transportation_1994} extended this to higher dimensions. \cite{Horowitz1994} then provided non-asymptotic bounds for the average speed of convergence for the empirical 2-Wasserstein distance. There are several refinements of these results, e.g., \cite{Boissard2014}, \cite{fournier_rate_2014} and \cite{weed_sharp_2017}.

As a natural extension of such results, there is a long
standing interest in distributional limits for EWD, in particular motivated from
statistical applications. Most of this work is restricted to the univariate
case $\calX \subset \R$. 
 \cite{Munk1998} derived central limit theorems for a trimmed WD on the real
 line when $\mu \neq \nu$ whereas \cite{Barrio1999, delBarrio_clt_1999} consider
 the empirical Wasserstein distance when $\mu$ belongs to a parametric family of
 distributions for the assessment of
 goodness of fit,  e.g., for a Gaussian location scale family. In a similar spirit \cite{Barrio2005} provided asymptotics for
 a weighted version of the empirical 2-Wasserstein distance in one dimension and
 \cite{Freitag2005} derive limit laws for semiparametric models, still
 restricted to the univariate case. There are also several results for dependent
 data in one dimension, e.g., \cite{dede_empirical_2008},
 \cite{dedecker_behavior_2015}. For a recent survey we refer to
 \cite{Bobkov2014} and \cite{mason_weighted_2016} and references therein.
 A major reason of the limitation to dimension $D = 1$ is that only for $\calX
 \subset \R$ (or more generally a totally ordered space) the coupling which
 solves \eqref{eq:def_wasser} is known explicitly
 and can be expressed in terms of the quantile functions $F^{-1}$ and $G^{-1}$
 of $\mu$ and $\nu$, respectively, as $\pi = (F^{-1} \times G^{-1}) \#
 \mathcal{L}$, where $\mathcal{L}$ is the Lebesgue measure on $[0,1]$ (see
 \cite{mallows_note_1972}). All the above mentioned work relies essentially on
 this fact. For higher dimensions only in specific settings such a coupling can
 be computed explicitly and then can be used to derive limit laws
 \citep{rippl_limit_2016}. Already for $D = 2$ \cite{ajtai_optimal_1984}
 indicate that the scaling rate for the limiting distribution of
 $W_1(\hat{\mu}_n, \mu)$ when $\mu$ is the uniform measure on $\calX = [0,1]^2$
 (if it exists) must be of complicated nature as it is bounded from above and
 below by a rate of order $\sqrt{n \log(n)}$. 

Recently, \cite{del_barrio_central_2017} gave  distributional limits for the
quadratic EWD in general dimension with a scaling rate $\sqrt{n}$. This yields a (non-degenerate) normal limit in the case $\mu \neq \nu$, i.e., when the
data generating measure is different from the measure to be compared with (extending
\cite{Munk1998} to $D > 1$). Their result centers the EWD with an expected EWD
(whose value is typically unknown) instead of the true WD and  requires $\mu$ and $\nu$ to have a
positive Lebesgue density on the interior of their convex support.  Their proof
uses the uniqueness and stability of the
optimal transportation potential (i.e., the minimizer of the dual transportation
problem, see \cite{villani_topics_2003} for a definition and further results)
and the Efron-Stein variance inequality. However, in the case $\mu = \nu$, their
distributional limit degenerates to a point mass at $0$, underlining the fundamental difficulty of this problem again.  

An alternative approach has been advocated recently in \cite{sommerfeld_inference_2018}
who restrict to finite spaces $\calX = \left\lbrace x_1,\ldots,
x_N\right\rbrace$. They derive limit laws for the EWD for $\mu = \nu$ (and $\mu
\neq\nu$), which requires a different scaling rate. In this
paper we extend their work to measures $\br = (r_x)_{x \in
\calX}$ that are supported on countable metric spaces $(\calX, d)$. Our approach links the
asymptotic distribution of the EWD on the one hand to the issue of weak convergence of the
underlying multinomial process associated with $\hat{\mu}_n$ with respect to a weighted $\ell^1$-norm (for fixed, but arbitrary $x_0 \in \calX$)
 \begin{equation}
 \normfp{\br} = \sum_{x \in \calX} d^p(x,x_0) \abs{r_x} + \abs{r_{x_0}},
 \label{eq:norm}
 \end{equation} 
 and on the other hand to infinite dimensional sensitivity analysis of the underlying linear program. Notably, we obtain a necessary and sufficient
 condition for such a limit law, which sheds some light on the limitation to
 approximate the WD between continuous measures for $D \geq 2$ by discrete random variables.\\

The outline of this paper is a follows. In Section \ref{sec:CLT} we give distributional limits for the EWD of measures that are supported on a countable metric space. In short, this limit can be characterized as the optimal value of an infinite dimensional linear program applied to a Gaussian process over the set of dual solutions. The main ingredients of the proof are the directional Hadamard differentiability of the Wasserstein distance on countable metric spaces and the delta method for non-linear derivatives. We want to emphasize that the delta method for non-linear derivatives is not a standard tool (see \cite{shapiro_asymptotic_1991, romisch_delta_2004}). Moreover, for the delta method to work here weak convergence in the weighted $\ell^1$-norm \eqref{eq:norm} of the underlying empirical process $\sqrt{n}(\brh_n - \br)$ is required as the directional Hadamard differentiability is proven w.r.t. this norm. We cannot prove the directional Hadamard differentiability with our methods w.r.t. the $\ell^1$-norm as the space of probability measures with finite $p$-th moment is not complete with respect to the $\ell^1$-norm, see Section \ref{sub:bounded_dia} for more details.
 We find that \begin{equation}
\sum_{x \in \calX} d^p(x,x_0)\sqrt{r_x} < \infty
\label{eq:entropy}
\end{equation} is necessary and sufficient for weak convergence. This condition arises from Jain's CLT \citep{Jain1977}. Furthermore, we examine \eqref{eq:entropy} in a more detailed way in Section \ref{sub:summability}. We give examples and counterexamples for \eqref{eq:entropy} and discuss whether the condition holds in case of an approximation of continuous measures. Further, we examine under which assumptions it follows that \eqref{eq:entropy} holds for all $p' \leq p$ if it is fulfilled for $p$, and put it in relation to its one-dimensional counterpart, see \cite{delBarrio_clt_1999}. We close this section by discussing simplifications for ground spaces $\calX$ with bounded diameter. \\
In Section \ref{sec:tree} we specify the case where the metric structure on the ground space is given by a rooted tree with weighted edges. In this case we can provide a simplified limiting distribution and use its explicit formula to derive a distributional upper bound for general metric spaces.\\ 
 In Section \ref{sec:rates} we combine this with a well known lower bound \citep{pele_fast_2009} to derive a computationally efficient strategy to test for the equality of two measures $\br$ and $\bs$ on a countable metric space. Furthermore, we derive an explicit formula of the upper bound from Section \ref{sec:tree} in the case of the support of $\br$ being a regular grid.\\
An application of our results to data from single marker switching microscopy imaging is given in Section \ref{sec:appl_SMS}. As the number of pixels typically is of magnitude $10^5$ - $10^6$ this challenges the assumptions of a finite space underlying the limit law in \cite{sommerfeld_inference_2018} and our work provides the theoretical justification to perform EWD based inference in such a case. Finally, we stress that our results can be extended to many other situations, e.g., the comparison of $k$ samples and when the underlying data are dependent, as soon as a weak limit of the underlying empirical process w.r.t. the weighted $\ell^1$-norm \eqref{eq:norm} can be shown.

\section{Distributional Limits}
\label{sec:CLT}
\subsection{Wasserstein distance on countable metric spaces}
Let throughout the following $\calX = \left\lbrace x_1, x_2, \ldots\right\rbrace $ be a countable metric space equipped with a metric $d \colon \calX \times \calX \to \R_+$. The probability measures on $\calX$ are infinite dimensional vectors $\br$ in 
\[\calP(\calX) = \left\lbrace \br = (r_x)_{x \in \calX}: r_x \geq 0 \quad \forall x \in \calX \text{ and } \sum_{x \in \calX} r_x = 1\right\rbrace. \] 
We want to emphasize that we consider the discrete topology on $\calX$ and do not embed $\calX$ for example in $\R^d$. This implies that the support of any probability measure $\br \in \calP(\calX)$ is the union of points $x \in \calX$ such that $r_x > 0$.
The $p$-th Wasserstein distance ($p \geq 1$) then becomes
\begin{equation}
W_p(\br,\bs) = \left\lbrace \min_{\bw \in \Pi(\br,\bs)}\sum_{x,x' \in \calX} d^p(x,x')w_{x,x'}\right\rbrace^{1/p},
\label{eq:wasser}
\end{equation}
where 
\begin{multline*}
\Pi(\br,\bs) = \Big\lbrace \bw \in \calP(\calX \times \calX): \sum_{x'\in\calX} w_{x,x'} = r_x \\
\text{ and } \sum_{x\in\calX} w_{x,x'} = s_{x'} \quad \forall x,x'\in \calX\Big\rbrace\end{multline*}
is the set of all couplings between $\br$ and $\bs$.
Furthermore, let
\[\calP_p(\calX) = \left\lbrace \br\in \calP(\calX): \sum_{x\in\calX} d^p(x,x_0)r_x < \infty \right\rbrace \]
be the set of probability measures on the countable metric space $\calX$ with finite $p$-th moment w.r.t. $d$. Here, $x_0 \in \calX$ is arbitrary and we want to mention that the space is independent of the choice of $x_0$. We need to introduce the weighted $\ell^1$-space $\ell^1_{d^p}(\calX)$ which is defined via the weighted $\ell^1$-norm \eqref{eq:norm} as in this case the set of probability measures with finite $p$-th moment is a closed subset and hence complete itself. This will play a crucial role in the proof of the directional Hadamard differentiability (see Appendix \ref{sec:hadamard}). The weighted $\ell^1$-norm \eqref{eq:norm} can be extended in the following way to sequences on $\calX \times \calX$ and hence to $\calP_p(\calX \times \calX)$
\begin{multline*}
\normfp{\bw} = \sum_{x, x'\in\calX} d^p(x_0,x) \abs{w_{x,x'}} + \abs{w_{x_0,x'}} \\
+ \sum_{x, x'\in\calX} d^p(x_0,x') \abs{w_{x,x'}} + \abs{w_{x,x_0}}.\end{multline*}

\subsection{Main Results}
Before we can state the main results we need a few definitions.\\
Define the empirical measure generated by i.i.d. random variables $X_1, \ldots, X_n$ from the measure $\br$ as
\begin{equation}
\brh_n = (\hat{r}_{n,x})_{x\in\calX}, \text{ where } \hat{r}_{n,x} = \frac{1}{n}\sum_{k=1}^{n}\one_{\left\lbrace X_k = x\right\rbrace },
\label{eq:emp_measure}
\end{equation}
and $\bsh_m$ is defined in the same way by $Y_1,\ldots, Y_m \overset{i.i.d.}{\sim} \bs$.
In the following we will denote weak convergence by $\weak$ and furthermore, let 
\[\ell^{\infty}(\calX) = \set{(a_x)_{x \in \calX} \in \R^\calX: \sup_{x \in \calX} \abs{a_x} < \infty}\]
and 
\[\ell^{1}(\calX) = \set{(a_x)_{x \in \calX} \in \R^\calX: \sum_{x \in \calX} \abs{a_x} < \infty}.\]
Finally, we also require a weighted version of the $\ell^\infty$-norm to characterize the set of dual solutions:
\[\normwinfty{a} = \max\left(\abs{a_{x_0}}, \sup_{x \neq x_0 \in\calX} \abs{d^{-p}(x,x_0) a_x}\right),\]
for $p \geq 1$.
The space $\ell^\infty_{d^{-p}}(\calX)$ contains all elements which have a finite $\normwinfty{\cdot}$-norm.\\
	For $\br,\bs \in \calP_p(\calX)$ we define the following convex sets
	\begin{equation}
	\begin{aligned}
	\calS^*(\br,\bs) = \Big\{(\blambda,\bmu) \in \ell^{\infty}_{d^{-p}}(\calX)\times \ell^{\infty}_{d^{-p}}(\calX): \left\langle \br,\blambda\right\rangle +\left\langle \bs,\bmu \right\rangle = W_p^p(\br,\bs)\\
	\lambda_x + \mu_{x'} \leq d^p(x,x') \quad \forall x,x' \in \calX \Big\}
	\end{aligned} 
	\label{eq:dual_set_rs}
	\end{equation}
	and
	\begin{equation}
		\begin{aligned}\calS^*(\br) = \Big\{\blambda \in \ell^{\infty}_{d^{-p}}(\calX): 	\lambda_x - \lambda_{x'} &\leq d^p(x,x') \quad \forall x,x' \in \supp(\br)\Big\},
	\label{eq:dual_set}
	\end{aligned} 
	\end{equation}
	with $\supp(\br) = \set{x \in \calX \colon r_x > 0}$.
For our limiting distributions we define the following (multinomial) covariance structure
	\begin{equation}
	\label{eq:def_sigma}
	\Sigma(\br) = \begin{cases}
	r_x(1-r_x) &\text{ if } x=x',\\
	-r_xr_{x'} &\text{ if } x \neq x'.
	\end{cases}
	\end{equation}

\begin{theorem}
	\label{thm:distrlimit_one}
	Let $(\calX, d)$ be a countable metric space and $\br,\bs \in \calP_p(\calX)$, $p \geq 1$, and $\brh_n$ be generated by i.i.d. samples $X_1, ..., X_n \sim \br$. Furthermore, let $\bG \sim \calN(0, \Sigma(\br))$ be a Gaussian process with $\Sigma$ as defined in \eqref{eq:def_sigma}. Assume \eqref{eq:entropy} for some $x_0 \in \calX$. Then
	\begin{enumerate}
		\item[a)] \label{thm:distrlimit_one_a} 
		\begin{equation}
		\label{eq:one_null}
		n^{\frac{1}{2p}} W_p(\brh_n,\br) \weak \left\lbrace \max_{\blambda \in\calS^*(\br)} \scalprod{\bG,\blambda}\right\rbrace^{\tfrac{1}{p}}, \text{ as } n \to \infty.
		\end{equation}
		
		\item[b)] \label{thm:distrlimit_one_b} In the case where $\br \neq \bs$ it holds for $ n \to \infty$
		\begin{multline}
		\label{eq:one_alternative}
			n^{\frac{1}{2}} (W_p(\brh_n,\bs) - W_p(\br,\bs)) \weak\\ \frac{1}{p} W_p^{1-p}(\br,\bs)\left\lbrace \max_{(\blambda,\bmu)\in\calS^*(\br,\bs)} \scalprod{\bG,\blambda}\right\rbrace.
		\end{multline}
	\end{enumerate}
	
\end{theorem}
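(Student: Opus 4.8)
The plan is to obtain both limit laws by combining a functional delta method for non-linearly (directionally Hadamard) differentiable maps with a weak limit for the empirical process $\sqrt{n}(\brh_n - \br)$ taken in the weighted $\ell^1$-norm $\normfp{\cdot}$. The architecture has three ingredients that I would establish in turn.

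\emph{Step 1: Weak convergence of the empirical process in $\ell^1(d^p)$.} First I would show that, under the summability condition \eqref{eq:entropy}, $\sqrt{n}(\brh_n - \br) \weak \bG$ in the Banach space $(\ell^1(d^p), \normfp{\cdot})$, where $\bG \sim \calN(0,\Sigma(\br))$. Finite-dimensional convergence is the ordinary multinomial CLT, so the real content is asymptotic tightness: one must control $\sum_{x} d^p(x,x_0)\,|\hat r_{n,x} - r_x|$ uniformly, i.e. show the tails $\sum_{x \notin F} d^p(x,x_0)\sqrt{n}\,|\hat r_{n,x} - r_x|$ are uniformly small in probability for large finite $F$. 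Here \eqref{eq:entropy} is exactly what is needed: $\mean\,\sqrt{n}|\hat r_{n,x} - r_x| \le \sqrt{r_x}$, so $\sum_x d^p(x,x_0)\mean\sqrt{n}|\hat r_{n,x}-r_x| \le \sum_x d^p(x,x_0)\sqrt{r_x} < \infty$, and a truncation/maximal-inequality argument (this is essentially the Borisov--Durst criterion for the process to be Donsker in $\ell^1(d^p)$, cf. \cite{Jain1977,Dudley2014}) upgrades this to asymptotic tightness. I would also need to check that the limit $\bG$ is a tight Borel element of $\ell^1(d^p)$, which again follows from \eqref{eq:entropy} since $\mean\,|G_x| \asymp \sqrt{r_x}$.

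\emph{Step 2: Directional Hadamard differentiability of $W_p$.} Next I would invoke (from the earlier part of the paper) that the map $\br \mapsto W_p^p(\br,\bs)$, as a function $\calP_p(\calX) \to \R$ with the domain carrying the $\normfp{\cdot}$-norm, is Hadamard directionally differentiable at the relevant point, with derivative given by an optimal value of a linear program over the dual feasible set. Concretely, in case (a) at the point $\br=\bs$ (where $W_p^p(\br,\br)=0$), the relevant first-order behavior is $W_p^p(\br + t\bh_t, \br) = t\,\big(\max_{\blambda\in\calS^*}\scalprod{\bh,\blambda}\big) + o(t)$ along $\bh_t \to \bh$ in $\normfp{\cdot}$; this is the sensitivity analysis of the infinite-dimensional transport LP, where the maximization runs over the dual solution set $\calS^*$ defined in \eqref{eq:dual_set}, and the weighted $\ell^\infty$-norm $\normwinfty{\cdot}$ on dual variables is what makes the pairing $\scalprod{\bh,\blambda}$ well-defined and continuous against $\bh \in \ell^1(d^p)$. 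In case (b), at a point $\br\neq\bs$ with $W_p(\br,\bs)>0$, the map $\br\mapsto W_p^p(\br,\bs)$ is in fact Hadamard differentiable (the LP value is locally linear in $\br$ in the regime where the dual optimum is attained on $\calS^*(\br,\bs)$), with derivative $\bh\mapsto \max_{(\blambda,\bmu)\in\calS^*(\br,\bs)}\scalprod{\bh,\blambda}$.

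\emph{Step 3: Delta method and the chain rule for the $p$-th root.} Finally I would apply the functional delta method for Hadamard directionally differentiable maps (\cite{shapiro_asymptotic_1991,romisch_delta_2004}): from Step 1, $\sqrt{n}(\brh_n-\br)\weak\bG$ in $\ell^1(d^p)$, and from Step 2 the relevant functional is Hadamard (directionally) differentiable, so $\sqrt{n}\big(W_p^p(\brh_n,\br) - W_p^p(\br,\br)\big) = \sqrt n\, W_p^p(\brh_n,\br) \weak \max_{\blambda\in\calS^*}\scalprod{\bG,\blambda}$ in case (a). For (a) I then compose with the continuous map $z\mapsto (z_+)^{1/p}$ (the limit random variable is a.s. nonnegative, being a max of linear functionals over a set containing $\bnull$ if $\bnull\in\calS^*$; in any case $W_p^p\ge 0$ forces nonnegativity), and rescale: $n^{1/(2p)}W_p(\brh_n,\br) = \big(\sqrt n\, W_p^p(\brh_n,\br)\big)^{1/p}\weak \big(\max_{\blambda\in\calS^*}\scalprod{\bG,\blambda}\big)^{1/p}$, giving \eqref{eq:one_null}. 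For (b), the full derivative of $\br\mapsto W_p(\br,\bs) = \big(W_p^p(\br,\bs)\big)^{1/p}$ at $\br\neq\bs$ is obtained by the ordinary chain rule with $z\mapsto z^{1/p}$ (smooth at the positive value $W_p^p(\br,\bs)$), contributing the factor $\frac1p W_p^{1-p}(\br,\bs)$; the delta method then yields \eqref{eq:one_alternative}.

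\emph{Main obstacle.} I expect the crux to be Step 2, the infinite-dimensional sensitivity analysis: one must verify that the transport LP's optimal value is genuinely Hadamard directionally differentiable \emph{with respect to the $\normfp{\cdot}$-topology} — not merely Gâteaux along fixed directions — and that the derivative is the stated max over $\calS^*$ (resp.\ $\calS^*(\br,\bs)$). This requires knowing that the dual optimal set is nonempty, lies in $\ell^\infty_{d^{-p}}(\calX)$, and that strong duality plus some equicontinuity/compactness (in a suitable weak-* sense) lets one pass to the limit along perturbations $\bh_t\to\bh$; the careful matching of the $\ell^1(d^p)$-norm on primal perturbations with the $\ell^\infty(1/d^p)$-norm on dual variables is precisely the ``careful calibration of the norm'' the abstract alludes to, and it is what makes the delta method applicable at all. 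A secondary technical point is ensuring the nonlinear (non-smooth at $z=0$) root map does not obstruct the delta method in case (a) — this is handled because $z\mapsto(z_+)^{1/p}$ is continuous, so the continuous mapping theorem applies after the delta method for $W_p^p$.
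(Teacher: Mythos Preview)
Your proposal is correct and follows essentially the same architecture as the paper: weak convergence of $\sqrt{n}(\brh_n-\br)$ in $\ell^1(d^p)$ under condition \eqref{eq:entropy} (the paper obtains this by invoking that $\ell^1_{d^p}$ has cotype~2 together with Jain's criterion rather than the direct tightness sketch you outline, but the content is the same), directional Hadamard differentiability of $W_p^p$ with respect to $\normfp{\cdot}$ via infinite-dimensional LP sensitivity (Theorem~\ref{thm:Hadamard} in the Appendix, using Bonnans--Shapiro), and then the non-linear delta method of R\"omisch/Shapiro plus the continuous mapping theorem for $z\mapsto z^{1/p}$ in~(a) and the chain rule in~(b). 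Your identification of Step~2 as the crux, and of the $\ell^1(d^p)$/$\ell^\infty(1/d^p)$ duality as the key calibration, matches exactly what the paper does.
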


Note, that we obtain different scaling rates under equality of measures $\br = \bs$ (null-hypothesis) and the case $\br\neq \bs$ (alternative), which has important statistical consequences. For $\br\neq \bs$ we are in the regime of the standard C.L.T. rate $\sqrt{n}$, but for $\br = \bs$ we get the rate $n^\frac{1}{2p}$, which is strictly slower for $p > 1$. 
\begin{rem}[Degeneracy of limit law]
	We would like to discuss in which settings the limit distribution in \eqref{eq:one_null} is degenerate. \\
	In the case that $\br$ has full support the limit degenerates to a point mass at $0$ if $\calS^*(\br)$ contains only constant elements, i.e., for a $c \in \R$  $\lambda_ x = c$ for all $x \in \calX$. Then, the right hand side in \eqref{eq:one_null} becomes zero. $\calS^*(\br)$ contains only constant elements if and only if the space $\calX$ has no isolated point. \\
	Specifying $\calX$ to be a subset of the real line $\R$ that has no isolated point it follows from Theorem 7.11. in \cite{Bobkov2014} that scaling with $\sqrt{n}$ provides then a non-degenerate limit law. On the other hand, as soon as $\calX \subset \R$ contains an isolated point our rate coincides with the rate given in \cite{Bobkov2014}. 
\end{rem}
\begin{rem} 
	\begin{enumerate}[a)]
		\item Note, that in Theorem \ref{thm:distrlimit_one_b} b) where the measures are not the same the objective function in \eqref{eq:one_alternative} is independent of the second component $\bmu$ of the feasible set $\calS^*(\br,\bs)$. This is due to the fact that in $W_p(\brh_n,\bs)$ the second component is not random.
		\item Observe, that the limit in \eqref{eq:one_alternative} is normally distributed if the set $\calS^*(\br,\bs)$ is a singleton up to a constant shift. In the case of finite $\calX$ conditions for  $\calS^*(\br,\bs)$ to be a singleton up to a constant shift are known \citep{hung_degeneracy_1986,Klee_facets_1968}.
		\item Parallel to our work \cite{del_barrio_central_2017} showed asymptotic normality of the quadratic EWD in general dimensions for the case $\br \neq \bs$. Their results require the measures to have moments of order $4 + \delta$ for some $\delta > 0$ and positive density on their convex support. Their proof relies on a Stein-identity. In the case $\br = \bs$ the limiting distribution is degenerated, in contrast to Thm. \ref{thm:distrlimit_one} a).
		\item The limiting distribution in the case $\br = \bs$ can also be written as
		\[\left\lbrace \max_{\blambda\in\calS^*(\br)} \scalprod{\bG,\blambda}\right\rbrace^{\tfrac{1}{p}} = \left\lbrace \inf_{\bz(\br) \in \ell^\infty_{d^{-p}}(\calX)} W_p(\bG^+ + \bz(\br),\bG^- + \bz(\bz))\right\rbrace^{1/p},\]
		where $ \bG^+$ and $\bG^-$ denotes the (pathwise) decomposition of the Gaussian process $\bG$, such that $\bG = \bG^+ - \bG^-$ and $\bz(\br)$ is related to $\br$ in the sense that $z_x = 0$ for that $x \in \calX$ such that $r_x = 0$. Further, we would like to emphasize that the set of dual solutions $\calS^*(\br)$ is independent of $\br$, if the support of $\br$ is full, i.e.,
		\begin{equation}
		\calS^* = \Big\{\blambda \in \ell^{\infty}_{d^{-p}}(\calX): \lambda_x - \lambda_{x'} \leq d^p(x,x') \quad \forall x,x' \in \calX \Big\}.
		\label{eq:dual_full}
		\end{equation}
		This offers a universal strategy to simulate the limiting distribution on trees independent of $\br$.
		For more details see Appendix \ref{sec:limit_equality}.
		
	\end{enumerate}
\label{rem:theorem1}
\end{rem}

For statistical applications it is also interesting to consider the two sample case, extensions to $k$-samples, $k \geq 2$ being obvious then.
	
\begin{theorem}
	\label{thm:distrlimit_two}
	
	Under the same assumptions as in Thm. \ref{thm:distrlimit_one} and with $\bsh_m$ generated by $Y_1, \ldots, Y_m \overset{iid}{\sim} \bs$, independently of $X_1, \ldots, X_n$ and $\bH \sim \calN(0, \Sigma(\bs))$, which is independent of $\bG$, and the extra assumption that $\bs$ also fulfills \eqref{eq:entropy} the following holds.
	
		\begin{enumerate}
		\item[a)]  Let $\rho_{n,m} = (nm/(n+m))^{1/2}$. If $\br=\bs$ and $\min(n,m) \to \infty$ such that $m/(n+m) \to \alpha \in [0,1]$ we have	\begin{equation}
		\label{eq:two_null}
		\rho^{1/p}_{n,m} W_p(\brh_n,\bsh_m)  \weak \left\lbrace \max_{\blambda \in\calS^*(\br)} \scalprod{\bG,\blambda}\right\rbrace^{\tfrac{1}{p}}.		
		\end{equation}
		
		\item[b)] For $\br \neq \bs$ and $n,m \to \infty$ such that $\min(n,m) \to \infty$ and $m/(n+m) \to \alpha\in [0,1]$ we have
		\begin{equation}
		\label{eq:two_alternative}
		\begin{aligned}
		\rho_{n,m} &(W_p(\brh_n,\bsh_m) - W_p(\br,\bs)) \weak \\
		&\frac{1}{p} W_p^{1-p}(\br,\bs)\left\lbrace \max_{(\blambda,\bmu)\in\calS^*(\br,\bs)} \sqrt{\alpha}\scalprod{\bG,\blambda} + \sqrt{1-\alpha}\left\langle \bH,\bmu\right\rangle \right\rbrace.
		\end{aligned}
		\end{equation}		
	\end{enumerate}
\end{theorem}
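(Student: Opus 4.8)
The plan is to mirror the proof of Theorem~\ref{thm:distrlimit_one}: combine the two \emph{independent} empirical processes, rescale them by $\rho_{n,m}$, and push them through the delta method for the two--argument optimal--value map $\Psi\colon(\bu,\bv)\mapsto W_p^p(\bu,\bv)$. First I would establish joint weak convergence of the empirical processes. By (the proof of) Theorem~\ref{thm:distrlimit_one}, condition \eqref{eq:entropy} for $\br$ gives $\sqrt n(\brh_n-\br)\weak\bG$ in $\bigl(\ell^1(d^p),\normfp{\cdot}\bigr)$, and \eqref{eq:entropy} for $\bs$ gives $\sqrt m(\bsh_m-\bs)\weak\bH$ in the same space. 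Since $\ell^1(d^p)$ is a separable Banach space and the samples $X_1,\dots,X_n$ and $Y_1,\dots,Y_m$ are independent, the pair converges jointly, $\bigl(\sqrt n(\brh_n-\br),\sqrt m(\bsh_m-\bs)\bigr)\weak(\bG,\bH)$ with $\bG\perp\bH$; both coordinates of the limit are concentrated on the closed subspace of mass--zero signed measures and vanish a.s.\ off the supports of $\br$ and $\bs$, and $\brh_n-\br$, $\bsh_m-\bs$ are supported on those supports as well.

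Because $\rho_{n,m}^2/n=m/(n+m)\to\alpha$ and $\rho_{n,m}^2/m=n/(n+m)\to1-\alpha$, Slutsky's lemma then yields $\rho_{n,m}\bigl((\brh_n,\bsh_m)-(\br,\bs)\bigr)\weak(\sqrt{\alpha}\,\bG,\sqrt{1-\alpha}\,\bH)$ in $\ell^1(d^p)\times\ell^1(d^p)$. Next I would invoke the two--argument version of the directional Hadamard differentiability underlying Theorem~\ref{thm:distrlimit_one}: by LP duality $W_p^p(\bu,\bv)=\sup\{\scalprod{\bu,\blambda}+\scalprod{\bv,\bmu}:\lambda_x+\mu_{x'}\le d^p(x,x')\ \forall x,x'\}$, so the same sensitivity analysis (carried out with respect to $\normfp{\cdot}$) shows that $\Psi$ is Hadamard directionally differentiable at $(\br,\bs)$, tangentially to the product of the mass--zero subspaces, with
\[
\Psi'_{(\br,\bs)}(\bh_1,\bh_2)=\max_{(\blambda,\bmu)\in\calS^*(\br,\bs)}\bigl(\scalprod{\bh_1,\blambda}+\scalprod{\bh_2,\bmu}\bigr).
\]
When $\br=\bs$, the constraints $\scalprod{\br,\blambda+\bmu}=W_p^p(\br,\br)=0$ and $\lambda_x+\mu_x\le d^p(x,x)=0$ force $\mu_x=-\lambda_x$ on $\mathrm{supp}(\br)$, so on the relevant directions (those supported on $\mathrm{supp}(\br)$) the derivative collapses to $\Psi'_{(\br,\br)}(\bh_1,\bh_2)=\max_{\blambda\in\calS^*}\scalprod{\bh_1-\bh_2,\blambda}$.

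Applying the delta method for Hadamard directionally differentiable maps (\cite{shapiro_asymptotic_1991,romisch_delta_2004}) to $\Psi$ along the sequence above gives $\rho_{n,m}\bigl(W_p^p(\brh_n,\bsh_m)-W_p^p(\br,\bs)\bigr)\weak\Psi'_{(\br,\bs)}(\sqrt{\alpha}\,\bG,\sqrt{1-\alpha}\,\bH)$. If $\br\ne\bs$ then $W_p^p(\br,\bs)>0$, $t\mapsto t^{1/p}$ is differentiable there, and a further ordinary delta step yields
\[
\rho_{n,m}\bigl(W_p(\brh_n,\bsh_m)-W_p(\br,\bs)\bigr)\weak\tfrac1p W_p^{1-p}(\br,\bs)\max_{(\blambda,\bmu)\in\calS^*(\br,\bs)}\bigl(\sqrt{\alpha}\scalprod{\bG,\blambda}+\sqrt{1-\alpha}\scalprod{\bH,\bmu}\bigr),
\]
which is \eqref{eq:two_alternative}. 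If $\br=\bs$ then $W_p^p(\br,\bs)=0$; since $\bnull\in\calS^*$ the variable $\max_{\blambda\in\calS^*}\scalprod{\sqrt{\alpha}\bG-\sqrt{1-\alpha}\bH,\blambda}$ is a.s.\ nonnegative, so the continuous map $t\mapsto t^{1/p}$ gives $\rho_{n,m}^{1/p}W_p(\brh_n,\bsh_m)\weak\bigl\{\max_{\blambda\in\calS^*}\scalprod{\sqrt{\alpha}\bG-\sqrt{1-\alpha}\bH,\blambda}\bigr\}^{1/p}$. Finally, $\br=\bs$ implies $\Sigma(\br)=\Sigma(\bs)$, so $\sqrt{\alpha}\bG-\sqrt{1-\alpha}\bH$ is centered Gaussian with covariance $\alpha\Sigma(\br)+(1-\alpha)\Sigma(\bs)=\Sigma(\br)$ and hence equal in law to $\bG$; this identifies the right--hand side with that of \eqref{eq:two_null} and finishes the proof.

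The main difficulty is concentrated in the differentiability step in the degenerate regime $\br=\bs$: the linear term of $W_p^p$ is non--degenerate but $(\cdot)^{1/p}$ is non--smooth at $0$, so $W_p$ itself fails to be Hadamard differentiable and one must run the non--standard delta method on $W_p^p$ and only afterwards take the $p$--th root; moreover this directional derivative has to be valid in the \emph{weighted} norm $\normfp{\cdot}$, the same space in which the empirical processes are known to converge (the calibration already achieved for Theorem~\ref{thm:distrlimit_one}). Everything else --- the two--argument LP bookkeeping, the description of $\calS^*(\br,\br)$ on the support, attainment and finiteness of the maxima modulo additive constants, joint weak convergence by independence, and the elementary identity $\sqrt{\alpha}\bG-\sqrt{1-\alpha}\bH\overset{d}{=}\bG$ --- is routine once the one--sample result is in hand.
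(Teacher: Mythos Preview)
Your proposal is correct and follows essentially the same route as the paper: establish joint weak convergence of the rescaled pair $\rho_{n,m}\bigl((\brh_n,\bsh_m)-(\br,\bs)\bigr)\weak(\sqrt{\alpha}\,\bG,\sqrt{1-\alpha}\,\bH)$ in $\ell^1(d^p)\times\ell^1(d^p)$, then apply the non-linear delta method via the two-argument Hadamard directional differentiability of $W_p^p$ already proved for Theorem~\ref{thm:distrlimit_one}, and finally compose with $t\mapsto t^{1/p}$ (via the chain rule when $\br\neq\bs$, via the continuous mapping theorem when $\br=\bs$). The paper's own proof is extremely terse (``works analogously'' plus the joint convergence display), so your write-up in fact supplies details---the reduction of $\calS^*(\br,\br)$ to $\calS^*$ on the support and the covariance identity $\sqrt{\alpha}\,\bG-\sqrt{1-\alpha}\,\bH\overset{d}{=}\bG$---that the paper either defers to its Appendix or leaves implicit.
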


\begin{rem}
	In the case of dependent data analogous results to Thm. \ref{thm:distrlimit_one} and \ref{thm:distrlimit_two} will hold, as soon as the weak convergence of the empirical process w.r.t. the $\normfp{\cdot}$-norm is valid. All other steps of the proof remain unchanged.   
\end{rem}

The rest of this subsection is devoted to the proofs of Theorem \ref{thm:distrlimit_one} and Theorem \ref{thm:distrlimit_two}.

\begin{proof}[Proof of Thm. \ref{thm:distrlimit_one} and Thm. \ref{thm:distrlimit_two}]
To prove these two theorems we use the delta method \ref{thm:deltamethod}. Therefore, we need to verify (1.) directional Hadamard differentiability of $W_p(\cdot,\cdot)$ and (2.) weak convergence of $\sqrt{n}(\brh_n - \br)$. We mention that the delta method required here is not standard as the directional Hadamard derivative is not linear (see \cite{romisch_delta_2004}, \cite{shapiro_asymptotic_1991} or \cite{dumbgen_nondifferentiable_1993}). 
\begin{enumerate}

	\item In Appendix \ref{sec:hadamard}, Theorem \ref{thm:Hadamard} directional Hadamard differentiability of $W_p$ is shown  with respect to the $\normfp{\cdot}$-norm \eqref{eq:norm}. \\
	
	\item The weak convergence of the empirical process w.r.t. the $\normfp{\cdot}$-norm is addressed in the following lemma.
	\begin{lem} 
		\label{lem:weak}
		Let $X_1, \ldots, X_n \sim \br$ be i.i.d. taking values in a countable metric space $(\calX, d)$ and let $\brh_n$ be the empirical measure as defined in \eqref{eq:emp_measure}. Then
		\[\sqrt{n}(\brh_n - \br) \xrightarrow{\mathscr{D}} \bG\]
		with respect to the $\normfp{\cdot}$-norm, where $\bG$ is a Gaussian process with mean 0 and covariance structure
		\[\Sigma(\br) = \begin{cases} r_x(1-r_x) & \text{ if } x = x',\\
		-r_xr_{x'} & \text{ if } x \neq x',
		\end{cases}\]
		as given in \eqref{eq:def_sigma} if and only if condition \eqref{eq:entropy} is fulfilled.
	\end{lem}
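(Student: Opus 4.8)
\emph{Strategy.} I would deduce the lemma from a central limit theorem in a plain $\ell^1$-space by the classical combination of convergence of finite-dimensional distributions with tightness. Set $w_{x_0}\defeq 1$ and $w_x\defeq d^p(x,x_0)$ for $x\neq x_0$, so that the diagonal map $T\colon (a_x)_{x\in\calX}\mapsto (w_x a_x)_{x\in\calX}$ is a \emph{surjective linear isometry} from the space $\{a\in\R^{\calX}:\normfp{a}<\infty\}$ onto $\ell^1(\calX)$ (indeed $\normfp{a}=\sum_{x\neq x_0}d^p(x,x_0)\abs{a_x}+\abs{a_{x_0}}=\sum_x w_x\abs{a_x}$); in particular the domain is separable and each coordinate evaluation $a\mapsto a_x$ is continuous on it, with operator norm $1/w_x$. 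It therefore suffices to prove $T\big(\sqrt{n}(\brh_n-\br)\big)=\big(w_x\,n^{-1/2}\sum_{k=1}^n(\one_{\{X_k=x\}}-r_x)\big)_{x\in\calX}\weak T\bG$ in $\ell^1(\calX)$, i.e.\ a CLT for the normalized sum of the i.i.d.\ centred $\ell^1$-valued vectors $Z_k\defeq(w_x(\one_{\{X_k=x\}}-r_x))_{x\in\calX}$.

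\emph{Sufficiency.} Assume \eqref{eq:entropy}; since $r_x\le 1$ this already forces $\br\in\calP_p(\calX)$, so $\sqrt{n}(\brh_n-\br)$ is a genuine random element of the space. Convergence of finite-dimensional distributions is the ordinary multivariate CLT: for finite $F\subset\calX$ the vector $\big(\sqrt{n}(\hat r_{n,x}-r_x)\big)_{x\in F}$ is a normalized sum of i.i.d.\ bounded vectors with covariance $\Sigma(\br)$ restricted to $F$, hence converges weakly to the corresponding centred Gaussian. For tightness of $\{\sqrt{n}(\brh_n-\br)\}_n$ in the separable Banach space underlying $\normfp{\cdot}$ it suffices, by the relative-compactness criterion in $\ell^1$ (bounded sets with uniformly small tails), to control the mass outside finite sets uniformly in $n$: for finite $F\ni x_0$,
\[
\mean\Big[\sum_{x\notin F}w_x\,\sqrt{n}\,\abs{\hat r_{n,x}-r_x}\Big]=\sqrt{n}\sum_{x\notin F}w_x\,\mean\abs{\hat r_{n,x}-r_x}\le\sqrt{n}\sum_{x\notin F}w_x\sqrt{\tfrac{r_x(1-r_x)}{n}}\le\sum_{x\notin F}w_x\sqrt{r_x},
\]
using $\mean\abs{\hat r_{n,x}-r_x}\le\sqrt{\mathrm{Var}(\hat r_{n,x})}$. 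The right-hand side is free of $n$ and tends to $0$ as $F\uparrow\calX$ by \eqref{eq:entropy}; the same estimate with $n$ deleted gives $\mean\normfp{\bG}<\infty$, so $\bG$ is itself a bona fide (tight) random element of the space. Markov's inequality upgrades the display to uniform tightness, and together with the finite-dimensional convergence — which also forces every subsequential limit to have the coordinate marginals of $\bG$, hence (a Borel law on a separable Banach space being determined by these) to equal $\bG$ — we conclude $\sqrt{n}(\brh_n-\br)\weak\bG$ in the $\normfp{\cdot}$-norm.

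\emph{Necessity and the main obstacle.} Conversely, suppose $\sqrt{n}(\brh_n-\br)\weak Z$ in the $\normfp{\cdot}$-norm; note this forces $\br\in\calP_p(\calX)$, for if $\sum_x d^p(x,x_0)r_x=\infty$ then, $\brh_n$ having finite support, $\normfp{\brh_n-\br}=\infty$ a.s.\ and no such limit exists. Applying the continuous coordinate projections and the multivariate CLT as above, every finite-dimensional marginal of $Z$ is centred Gaussian with covariance prescribed by $\Sigma(\br)$, so $Z$ has the law of $\bG$. Since $\bG$ is then a centred Gaussian random element of a separable Banach space, Fernique's theorem yields $\mean\normfp{\bG}<\infty$; but $\mean\normfp{\bG}=\sqrt{2/\pi}\big(\sqrt{r_{x_0}(1-r_{x_0})}+\sum_{x\neq x_0}d^p(x,x_0)\sqrt{r_x(1-r_x)}\big)$, and since $r_x\le\tfrac12$ for all but finitely many $x$, where $\sqrt{r_x(1-r_x)}\ge\sqrt{r_x/2}$, finiteness of this sum is equivalent to \eqref{eq:entropy}. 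I expect the necessity to be the only genuinely non-routine part, and within it the one step that is not elementary is the passage from ``$\normfp{\bG}<\infty$ a.s.'' to ``$\mean\normfp{\bG}<\infty$'': I would simply invoke Gaussian integrability (Fernique), although one can alternatively show by hand that $\normfp{\sqrt{n}(\brh_n-\br)}\to\infty$ in probability when \eqref{eq:entropy} fails, which is fussier because one must split the atoms according to the size of $nr_x$. It is worth observing that this lemma is the $d^p$-weighted analogue of the Borisov--Durst characterization of the Donsker property of the class of all subsets of $\calX$ (equivalently, of the $\ell^1$-valued CLT for the multinomial process), corresponding to $d^p\equiv 1$; see \cite{Dudley2014} and \cite{Jain1977}.
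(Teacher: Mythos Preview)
Your argument is correct. The paper, however, takes a much shorter black-box route: it simply observes that the weighted space $\ell^1_{d^p}$ is of cotype~2 (citing \cite{Maurey1973}) and then invokes Corollary~1 of \cite{Jain1977}, which characterizes the Banach-space CLT for i.i.d.\ summands in cotype-2 spaces and yields the equivalence with~\eqref{eq:entropy} directly. No tightness computation or Fernique is written out.

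What you do differently is give a self-contained proof: for sufficiency you reduce to $\ell^1(\calX)$ by a diagonal isometry and verify tightness by hand via the uniform first-moment tail bound $\mean\big[\sum_{x\notin F}w_x\sqrt{n}\,\abs{\hat r_{n,x}-r_x}\big]\le\sum_{x\notin F}w_x\sqrt{r_x}$, combined with the elementary compactness criterion in $\ell^1$ and convergence of finite-dimensional marginals; for necessity you identify any weak limit with $\bG$ via its coordinate marginals (using that on a separable $\ell^1$ over a countable index set the coordinate $\sigma$-algebra is the Borel $\sigma$-algebra) and then apply Fernique's integrability theorem to force $\mean\normfp{\bG}<\infty$, which is equivalent to~\eqref{eq:entropy}. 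This buys a proof that a reader unfamiliar with type/cotype theory can follow end to end, and it makes transparent exactly where~\eqref{eq:entropy} enters on each side. The paper's approach, by contrast, is a two-line appeal to structural Banach-space results; it is more economical and situates the lemma as a special case of a known general phenomenon, but it hides the mechanism. Your closing remark linking the lemma to the Borisov--Durst theorem and to \cite{Jain1977} is in fact exactly the connection the paper exploits.
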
 
	\begin{proof}[Proof of Lemma]
		The weighted $\ell^1$-space $\ell^1_{d^p}$ is according to Prop. 3, \cite{Maurey1973} of cotype 2, hence $\sqrt{n}(\brh_n - \br)$ converges weakly w.r.t. the $\ell^1(d^p)$-norm by Corollary 1 in \cite{Jain1977} if and only if the summability condition \eqref{eq:entropy} is fulfilled.
	\end{proof}
\end{enumerate}
Theorem \ref{thm:distrlimit_one_a} a) is now a straight forward application of the delta method \ref{thm:deltamethod} and the continuous mapping theorem for $f(x) = x^{1/p}$.

For Theorem \ref{thm:distrlimit_one_b} b) we use again the delta method, but this time in combination with the chain rule for directional Hadamard differentiability (Prop. 3.6 (i), \cite{Shapiro1990}).

The proof of Theorem \ref{thm:distrlimit_two} works analogously. Note, that under the assumptions of the theorem it holds $(\br = \bs)$
\begin{multline}
\rho_{n,m} ((\brh_n,\bsh_m) - (\br,\bs)) \\= \left( \sqrt{\frac{m}{n+m}}\sqrt{n}(\brh_n-\br), \sqrt{\frac{n}{n+m}}\sqrt{m}(\bsh_m - \bs)\right)  \\
\weak (\sqrt{\alpha}\bG, \sqrt{1-\alpha}\bG')
\label{eq:two_sample_null}
\end{multline}
with $\bG' \overset{\mathcal{D}}{=} \bG$.
For further explanations see Appendix \ref{sec:limit_equality}.
\end{proof}

\subsection{Examination of the summability condition \eqref{eq:entropy}}
\label{sub:summability}
	According to Lemma \ref{lem:weak} condition \eqref{eq:entropy} is necessary and sufficient for the weak convergence with respect to the $\normfp{\cdot}$-norm defined in \eqref{eq:norm}. As this condition is crucial for our main theorem and we are not aware of a comprehensive discussion, we will provide such in this section.
	
	The following question arises. "If the condition holds for $p$ does it then also hold for all $p' \leq p$?" This is not true in general, but it is true if $\calX$ has no accumulation point (i.e., is discrete in the topological sense).
	\begin{lem}
		Let $\calX$ be a space without any accumulation point with respect to the metric $d$. If condition \eqref{eq:entropy} holds for $p$, then it also holds for all $1 \leq p' \leq p$.
	\end{lem}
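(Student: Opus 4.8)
The plan is to reduce the comparison between the two weighted sums to a termwise estimate with a single multiplicative constant, exploiting that in a space without accumulation point the distances to $x_0$ cannot get arbitrarily small.

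First I would record the topological input. Since $(\calX,d)$ has no accumulation point, it is discrete in the topological sense, so in particular the singleton $\{x_0\}$ is open; equivalently, there exists $\varepsilon_0>0$ such that $d(x,x_0)\geq\varepsilon_0$ for all $x\in\calX\setminus\{x_0\}$. (Only the isolation of $x_0$ is actually used, but it is of course implied by the hypothesis.)

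Next, fix $p'\leq p$. Then $p'-p\leq 0$, so $t\mapsto t^{\,p'-p}$ is non-increasing on $(0,\infty)$, and therefore $d^{\,p'-p}(x,x_0)\leq\varepsilon_0^{\,p'-p}=:C<\infty$ for every $x\neq x_0$. Writing $d^{p'}(x,x_0)=d^{\,p'-p}(x,x_0)\,d^{p}(x,x_0)$ and multiplying by $\sqrt{r_x}$ yields $d^{p'}(x,x_0)\sqrt{r_x}\leq C\,d^{p}(x,x_0)\sqrt{r_x}$ for $x\neq x_0$, while the $x=x_0$ term vanishes on both sides. Summing over $x\in\calX$ and using that \eqref{eq:entropy} holds for $p$ gives $\sum_{x\in\calX}d^{p'}(x,x_0)\sqrt{r_x}\leq C\sum_{x\in\calX}d^{p}(x,x_0)\sqrt{r_x}<\infty$, which is exactly \eqref{eq:entropy} for $p'$.

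There is no substantive obstacle in this argument; the only point to get right is the passage from ``no accumulation point'' to the uniform lower bound $\inf_{x\neq x_0}d(x,x_0)>0$, and the observation that it is precisely this bound which can fail in the presence of an accumulation point. This also pinpoints why the statement is false in general: one takes atoms $x_n$ with $d(x_n,x_0)\to 0$ and masses $r_{x_n}$ tuned so that $\sum_n d^{p}(x_n,x_0)\sqrt{r_{x_n}}<\infty$ while $\sum_n d^{p'}(x_n,x_0)\sqrt{r_{x_n}}=\infty$ for some $p'<p$, using that $d^{\,p'-p}(x_n,x_0)\to\infty$.
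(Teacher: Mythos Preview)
Your proof is correct and follows essentially the same route as the paper: both arguments extract from the no-accumulation-point hypothesis a uniform lower bound $d(x,x_0)\geq\varepsilon_0>0$ for $x\neq x_0$, and then use this to compare $d^{p'}$ with $d^{p}$ termwise up to a fixed multiplicative constant. The paper writes this as $(d/\epsilon)^{p}\geq(d/\epsilon)^{p'}$ while you factor $d^{p'}=d^{p'-p}d^{p}$ and bound $d^{p'-p}\leq\varepsilon_0^{\,p'-p}$, but these are the same estimate.
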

\begin{proof}
	Let $\calX$ be a space without an accumulation point, i.e., there exists $\epsilon > 0$ such that $d(x,x') > \epsilon$ for all $x \neq x' \in \calX$. Then, 
	\begin{align*}
	\sum_{x\in\calX} d^p(x_0,x) \sqrt{r_x} &= \epsilon^p \sum_{x \in\calX} \left( \frac{d(x_0,x)}{\epsilon}\right)^p \sqrt{r_x}  \\
	&\geq  \epsilon^p \sum_{x \in\calX} \left( \frac{d(x_0,x)}{\epsilon}\right)^{p'} \sqrt{r_x}\\
	&= \epsilon^{p/{p'}} \sum_{x\in\calX} d^{p'}(x_0,x)\sqrt{r_x}.
	\end{align*}
\end{proof} 
	\paragraph{Exponential families}
	As we will see, condition \eqref{eq:entropy} is fulfilled for many well known distributions including the Poisson distribution, geometric distribution or negative binomial distribution with the euclidean distance as the ground measure $d$ on $\calX = \N$. 
	\begin{theorem}
		\label{thm:sum}
		Let $(\calP_{\boeta})_{\boeta}$ be an s-dimensional standard exponential family (SEF) (see \cite{Lehmann1998}, Sec. 1.5) of the form
		\begin{equation}
		\label{eq:sef}
		r_x^{\boeta} = h_x \exp\left(\sum_{i= 1}^{s} \eta_iT_x^i - A(\boeta)\right).
		\end{equation}
		
		The summability condition \eqref{eq:entropy} is fulfilled if $(\calP_{\boeta})_{\boeta}$ satisfies
		\begin{itemize}
			\item[1.)]  $h_x \geq 1$ for all $x \in \calX$,
			\item[2.)] the natural parameter space $\calN$ is closed with respect to multiplication with $\frac{1}{2}$, i.e., $\sum_{x\in\calX} r^{\boeta}_x < \infty \Rightarrow \sum_{x\in\calX} r^{\boeta/2}_x < \infty$,
			\item[3.)] the $p$-th moment w.r.t. the metric $d$ on $\calX$ exists, i.e., $\sum_{x\in\calX} d^p(x,x_0)r^{\boeta}_x < \infty$ for some arbitrary, but fixed $x_0 \in \calX$.
		\end{itemize}
	\end{theorem}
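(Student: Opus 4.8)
The plan is to turn condition \eqref{eq:entropy} for $\br^{\boeta}$ into an ordinary $p$-th moment condition for the measure obtained by halving the natural parameter: halving $\boeta$ is exactly what ``undoes'' the square root in \eqref{eq:entropy}. The key pointwise estimate is that $\sqrt{r_x^{\boeta}}$ is, up to a constant depending only on $\boeta$, at most $r_x^{\boeta/2}$. Indeed,
\[
\sqrt{r_x^{\boeta}} \;=\; \sqrt{h_x}\,\exp\!\Big(\tfrac12\textstyle\sum_{i=1}^{s}\eta_i T_x^i - \tfrac12 A(\boeta)\Big)
\;\le\; h_x\,\exp\!\Big(\tfrac12\textstyle\sum_{i=1}^{s}\eta_i T_x^i - A(\boeta/2)\Big)\, e^{A(\boeta/2)-\frac12 A(\boeta)}
\;=\; C\, r_x^{\boeta/2},
\]
where $C\defeq e^{A(\boeta/2)-\frac12 A(\boeta)}$. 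All three hypotheses are used here: assumption 1.) gives $\sqrt{h_x}\le h_x$, which is exactly the inequality above; assumption 2.) ensures $\boeta/2$ lies in the natural parameter space $\calN$, so that $r_x^{\boeta/2}$ really is the density of a member of the family and $A(\boeta/2)<\infty$; and since also $A(\boeta)<\infty$ for $\boeta\in\calN$, the constant $C$ is finite.

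Summing the pointwise bound against $d^p(\cdot,x_0)$ then gives
\[
\sum_{x\in\calX} d^p(x,x_0)\sqrt{r_x^{\boeta}} \;\le\; C\sum_{x\in\calX} d^p(x,x_0)\, r_x^{\boeta/2},
\]
and the right-hand side is finite by assumption 3.), applied to the measure $\br^{\boeta/2}$ --- which is legitimate precisely because $\boeta/2\in\calN$. Hence $\br^{\boeta}$ satisfies \eqref{eq:entropy}, as claimed. For concrete families covered by the theorem (e.g.\ the geometric and negative binomial laws on $\calX=\N$ with the Euclidean metric) one then only has to check 1.)--3.) directly: the prefactors $h_x$ are $\ge 1$ (they are $1$, resp.\ binomial coefficients), $\calN$ is a half-line or all of $\R$ and hence stable under multiplication by $\tfrac12$, and these laws have finite moments of every order.

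I expect the only delicate point to be making sure assumption 3.) is invoked at the right parameter: the argument does not use finiteness of the $p$-th moment of $\br^{\boeta}$ itself but of $\br^{\boeta/2}$, so one has to read 3.) as a property available for the members of the family supplied by 2.). One should also be careful that assumption 1.) is used in the direction $h_x\ge 1$ (i.e.\ $\sqrt{h_x}\le h_x$), which is what lets $\sqrt{r_x^{\boeta}}$ be controlled by $r_x^{\boeta/2}$ and not the other way around; a mere boundedness assumption on $h_x$ would not suffice. Everything else is routine bookkeeping with the log-partition function $A$.
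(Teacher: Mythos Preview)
Your argument is correct and is essentially the paper's own proof: both bound $\sqrt{r_x^{\boeta}}$ pointwise by a constant times $r_x^{\boeta/2}$ via $\sqrt{h_x}\le h_x$, then invoke the $p$-th moment condition at the halved parameter $\boeta/2$ (which lies in $\calN$ by assumption~2). Your remark that assumption~3.) must be read as holding for $\br^{\boeta/2}$, not merely for $\br^{\boeta}$, is exactly the point the paper uses implicitly when it combines assumptions~2.) and~3.) in the last step.
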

	
	\begin{proof}
		For the SEF in \eqref{eq:sef} condition \eqref{eq:entropy} reads
		\begin{align}\sum_{x\in \calX} d^p(x_0,x) &\sqrt{\exp\left(\sum_{i=1}^{s}\eta_iT^i_x  - A(\boeta)\right) h_x} \nonumber\\
		&= \frac{1}{\sqrt{\lambda(\boeta)}}\sum_{x\in\calX}d^p(x_0,x) \exp\left(\tfrac{1}{2}\sum_{i=1}^{s}\eta_iT_x^i\right)\sqrt{h_x} \\
		&\leq \frac{\lambda(\tfrac{1}{2}\boeta)}{\sqrt{\lambda(\boeta)}}\sum_{x\in\calX}d^p(x_0,x) \exp\left(\tfrac{1}{2}\sum_{i=1}^{s}\eta_iT_x^i\right)h_x < \infty \nonumber,		
		\end{align}
		where $\lambda(\boeta)$ denotes the Laplace transform. The first inequality is due to the fact that $h_x \geq 1$ for all $x\in\calX$ and the second is a result of the facts that the natural parameter space is closed with respect to multiplication with $\tfrac{1}{2}$ and that the $p$-th moment w.r.t. $d$ exist.
	\end{proof}

	The following examples show, that all three conditions in Theorem \ref{thm:sum} are necessary.
	 
	\begin{example}
	 Let $\calX$ be the countable metric space $\calX = \left\lbrace \frac{1}{k}\right\rbrace_{k \in \N}$ and let $\br$ be the measure with probability mass function $$r_{1/k} = \frac{1}{\zeta(\eta)}\frac{1}{k^\eta}$$ with respect to the counting measure. Here, $\zeta(\eta)$ denotes the Riemann zeta function. This is an SEF with natural parameter $\eta$, natural statistic $-\log(k)$ and natural parameter space $\calN = (1,\infty).$
		We choose the euclidean distance as the distance $d$ on our space $\calX$ and set $x_0 = 1$. It holds
		\[\sum_{k=1}^{\infty} \abs{1 -\frac{1}{k}}^p\frac{1}{\zeta(\eta)}\frac{1}{k^{\eta}} \leq \sum_{k=1}^{\infty} \frac{1}{\zeta(\eta)}\frac{1}{k^{\eta}} = 1 <\infty \quad \forall  \eta\in \calN\]
		and hence all moments exist for all $\eta$ in the natural parameter space. Furthermore, $h_{1/k} \equiv 1$. 
		However, the natural parameter space is not closed with respect to multiplication with $\frac{1}{2}$ and therefore,
		\[\sum_{k=1}^{\infty} \abs{1 -\tfrac{1}{k}}^p\frac{1}{\zeta(\eta)}\frac{1}{k^{\eta/2}} \geq \frac{1}{2^p} \sum_{k=2}^{\infty} \frac{1}{\sqrt{\zeta(\eta)}}\frac{1}{k^{\eta/2}} = \infty \quad \forall  \eta\in (1,2],\]
		i.e., condition \eqref{eq:entropy} is not fulfilled.
	\end{example}
	The next example shows, that we cannot omit condition 1.) in Thm. \ref{thm:sum}.
	\begin{example}
		Consider $\calX = \N$ with the metric $d(k,l) = \sqrt{\abs{k!-l!}}$. The family of Poisson distributions constitute an SEF with natural parameter space $\calN = (-\infty,\infty)$ which satisfies condition 2.) in Thm. \ref{thm:sum}, i.e., closed with respect to multiplication with $\tfrac{1}{2}$. The first moment with respect to this metric exists and $h_k < 1$ for all $k \geq 2$. Condition \eqref{eq:entropy} for $p = 1$ with $x_0 = 0$ reads 
		\[\sum_{k=1}^{\infty} \sqrt{k!} \sqrt{\frac{\eta^k}{k!}\exp(-\eta) } = \sum_{k=1}^{\infty} \eta^{k/2}\exp(-\eta/2) = \infty\]
		for all $\eta > 1$, i.e., the summability condition \eqref{eq:entropy} is not fulfilled.
	\end{example}	
	If the $p$-th moment does not exist, it is clear that condition \eqref{eq:entropy} cannot be fulfilled as $\sqrt{x} \geq x$ for $x \in [0,1]$.\\

	\subsection{Approximation of continuous distributions}
	In this section we investigate to what extent we can approximate continuous measures by its discretization such that condition \eqref{eq:entropy} remains valid. Let $\calX = \left(\frac{k}{M}\right)_{k\in\Z}$ with $M \in \N$ be a discretization of $\R$ and $X$ a real-valued random variable with c.d.f. $F$ which is continuous and has a Lebesgue density $f$. We take $d$ to be the euclidean distance and $x_0 = 0$. For $k \in \Z$ we define 
	\begin{equation}
	r_k := F\left(\frac{k+1}{M}\right) - F\left(\frac{k}{M}\right).
	\label{eq:bin}
	\end{equation}
	Now, \eqref{eq:entropy} can be estimated as follows.
	\begin{align*}
	&\sum_{k=-\infty}^{\infty} \abs{\frac{k}{M}}^p \sqrt{F\left(\frac{k+1}{M}\right) - F\left(\frac{k}{M}\right)}\\
	& =\sum_{k=-\infty}^{\infty} \abs{\frac{k}{M}}^p \frac{1}{\sqrt{M}} \sqrt{M \int_{k/M}^{(k+1)/M}f(x) dx}\\
	&\geq  \sum_{k=-\infty}^{\infty} \abs{\frac{k}{M}}^p \sqrt{M}  \int_{k/M}^{(k+1)/M} \sqrt{f(x)} dx\\
	& \geq \sqrt{M} \sum_{k=-\infty}^{\infty} \frac{1}{2^p}\int_{k/M}^{(k+1)/M} \abs{x}^p\sqrt{f(x)} dx \\
	&= \sqrt{M} \frac{1}{2^p}\int_{\R} \abs{x}^p \sqrt{f(x)}dx,
	\end{align*}
	 where the first inequality is due to Jensen's inequality. As the r.h.s. tends to infinity with rate $\sqrt{M}$ as $M \to \infty$, condition \eqref{eq:entropy} does not hold in the limit. Hence, in general our method of proof cannot be extended in an obvious way to continuous measures.
%
	 	
\paragraph{The one-dimensional case $D =1$}	 
For the rest of this Section we consider $\calX = \R$ and want to put condition \eqref{eq:entropy} in relation to the condition \citep{delBarrio_clt_1999}
 \begin{equation}
 \int_{-\infty}^{\infty} \sqrt{F(t)(1-F(t))} dt < \infty,
 \label{eq:Barrio}
 \end{equation}
 where $F(t)$ denotes the cumulative distribution function, which is sufficient and necessary for the empirical 1-Wasserstein distance on $\R$ to satisfy a limit law (see also Corollary 1 in \cite{Jain1977} in a more general context). \\  
 Condition \eqref{eq:entropy} is stronger than \eqref{eq:Barrio} as the following shows. 
 Let $\calX$ be a countable subset of $\R$ and index the elements $x_i$ for $i \in \Z$ such that they are ordered. Furthermore, let $d(x,y) = \abs{x-y}$ be the euclidean distance on $\calX$. For any measure $\br$ with cumulative distribution function $F$ on $\calX$ it holds
 \begin{equation*}
 \begin{aligned}
 &\int_{-\infty}^{\infty} \sqrt{F(t)(1-F(t))} dt\\
 & = \sum_{k\in \Z} d(x_k,x_{k+1})\sqrt{\sum_{j \leq k} r_j}\sqrt{\sum_{j>k}r_j}\\
 &\leq \sum_{k=0}^{\infty} d(x_k,x_{k+1}) \sqrt{\sum_{j > k} r_j} + \sum_{k=-\infty}^{-1} d(x_k,x_{k+1}) \sqrt{\sum_{j \leq k} r_j} \\
 & \leq \sum_{k=0}^{\infty} d(x_k,x_{k+1}) \sum_{j > k}\sqrt{r_j} + \sum_{k=-\infty}^{-1} d(x_k,x_{k+1}) \sum_{j \leq k}\sqrt{ r_j} \\
 & =  \sum_{k=0}^{\infty} d(x_0,x_{k})\sqrt{r_k} + \sum_{k=-\infty}^{-1} d(x_0,x_{k}) \sqrt{r_k}.
 \end{aligned} 	
 \end{equation*}
Hence, if condition \eqref{eq:entropy} holds, \eqref{eq:Barrio} is also fulfilled. However, the conditions are not equivalent as the following example shows.
\begin{example}
	Let $\calX = \N$ and $d(x,y) = \abs{x-y}$ the euclidean distance and $\br$ a power-law, i.e., $r_n = \frac{1}{\zeta(s)}\frac{1}{n^s}$, where $\zeta(s)$ is the Riemann zeta function. In this case \eqref{eq:Barrio} reads
	\begin{equation*}
	\begin{aligned}
	&\int_{-\infty}^{\infty} \sqrt{F(t)(1-F(t))} dt =\frac{1}{\zeta(s)} \sum_{k = 1}^{\infty} \sqrt{\sum_{j = 1}^{k} \frac{1}{j^s} \sum_{j = k+1}^{\infty} \frac{1}{j^s}}\\
	&\leq \frac{1}{\zeta(s)} \sum_{k = 1}^{\infty} \sqrt{\sum_{j = k}^{\infty} \frac{1}{j^s}}
 \lesssim \frac{1}{\zeta(s)} \sum_{k = 1}^{\infty} \sqrt{\frac{s}{k^{s- 1}}}
	\end{aligned}
	\end{equation*}
	and this is finite if and only if $s > 3$. 
	On the other hand, condition \eqref{eq:entropy} reads as 
	\[\sum_{k =1}^{\infty} (k-1) \sqrt{\frac{1}{\zeta(s)}\frac{1}{k^s}} \leq \frac{1}{\sqrt{\zeta(s)}} \sum_{k= 1}^{\infty} \frac{1}{k^{s/2 -1}}.\]
	This is finite if and only if $s > 4$. Hence, condition \eqref{eq:Barrio} is fulfilled for $s \in (3,4]$, but not \eqref{eq:entropy}.
\end{example}	
For $p = 2$ in dimension $D = 1$ there is no such easy condition anymore in the case of continuous measures, see \cite{Barrio2005}. Already for the normal distribution one needs to subtract a term that tends sufficiently fast to infinity to get a distributional limit (which was originally proven by \cite{deWet1972}). Nevertheless, for a fixed discretization of the normal distribution via binning as in \eqref{eq:bin} condition \eqref{eq:entropy} is fulfilled and Theorems \ref{thm:distrlimit_one} and \ref{thm:distrlimit_two} are valid.

\subsection{Bounded diameter of $\calX$}
\label{sub:bounded_dia}
For $\calX$ with bounded diameter further simplifications can be obtained.\\
First and most important, we do not need to introduce the spaces $\ell^1_{d^p}(\calX)$ and its dual $\ell^\infty_{d^{-p}}(\calX)$ in this case. This is due to the fact, that as the diameter of the space with respect to the metric $d$ is bounded all moments of probability measures on this space exist. Hence, we do not need to restrict to probability measures that have finite $p$-th moment to guarantee that the linear program \eqref{eq:linearprog} defining the Wasserstein distance has a finite value. Thus, we can operator on $\calP(\calX)$ which is a subset of $\ell^1(\calX)$. This simplifies the summability condition \eqref{eq:entropy} to 
\[\sum_{x \in \calX} \sqrt{r_x} < \infty \] as we get directional Hadamard differentiability with respect to the $\norm{\cdot}_1$-norm.

\section{Limiting Distribution for Tree Metrics}
\label{sec:tree}

\subsection{Explicit limits}
\label{sub:explicit_tree}
In this subsection we give an explicit expression for the limiting distribution in  
\eqref{eq:one_null} and \eqref{eq:two_null} in the
case $\br = \bs$  with full support (otherwise see Rem. \ref{rem:non_full_support_tree}) when the metric is
generated by a weighted tree. This extends Thm. 5 in \cite{sommerfeld_inference_2018} for finite spaces to countable spaces $\calX$. In the following we recall their notation.

Assume that the metric structure on the countable space $\calX$ is given by a weighted tree, that is, an
undirected connected graph $\calT = (\calX, E)$
with vertices  $\calX$ and edges $E \subset \calX\times \calX$ that
contains no cycles. We assume the edges to be weighted by a function
\[
w:E \rightarrow \R_{+}.
\]
Without imposing any further restriction on $\calT$, we assume it to be rooted at
$\rootT (\calT)\in \calX$, say. Then, for $x\in \calX$ and $x\neq\rootT(\calT)$  we may define
$\parent(x)\in \calX$ as the
immediate neighbor of $x$ in the unique path connecting $x$ and $\rootT(\calT)$. 
We set $\parent(\rootT(\calT))=\rootT(\calT)$.
We also define $\children(x)$ as the set of vertices $x'\in \calX$ such that there
exists a sequence $x' = x_1, \dots , x_n = x \in \calX$ with $\parent(x_j) =
x_{j+1}$ for $j=1,\dots,n-1$. Note that with this definition $x\in\children(x)$. Furthermore, observe that $\children(x)$ can consist of countably many elements, but the path joining $x$ and $x' \in \children(x)$ is still finite as explained below.  

For $x, x'\in \calX$ let $e_1,\dots,e_n\in
E$ be the unique path in $\calT$ joining $x$ and $x'$, then the length of this
path, 
\[
d_\calT(x,x') = \sum_{j=1}^n w(e_j),
\]
defines a metric $d_\calT$ on $\calX$. This metric is well defined, since the unique path joining $x$ and $x'$ is finite as we show in the following. Let $A_0 = \set{x \in \calX : x = \rootT(\calT)}$ and $A_k = \set{x \in \calX: \parent(x) \in A_{k-1}}$ for $k \in \N$. By the definition of the $A_k$, these sets are disjoint and it follows $\bigcup_{k = 0}^{\infty} A_k = \calX$. Now let $x,x' \in \calX$, then there exist $k_1$ and $k_2$ such that $x \in A_{k_1}$ and $x' \in A_{k_2}$. Then, there is a sequence of $k_1 + k_2 +1$ vertices connecting $x$ and $x'$. Hence, the unique path joining $x$ and $x'$ has at most $k_1 + k_2$ edges.

Additionally,  
define 
\[
(S_\calT \bu)_x = \sum_{x'\in\children(x)} u_{x'}
\]
and 
\begin{equation}
Z_{\calT,p}(\bu) = \left\{\sum_{
	x \in \calX} |(S_\calT \bu)_x| d_\calT(x,\parent(x))^p\right\}^{\frac{1}{p}}
\label{eq:def_ZT}
\end{equation}
for $\bu\in\R^{\calX}$ and we set w.l.o.g. $x_0 = \rootT(\calT)$.

The main result of this section is the following.
\begin{theorem}
	\label{THM:TREES}
	Let $\br\in \mathcal{P}_p(\calX)$, defining a probability distribution on $\calX$ that fulfils condition \eqref{eq:entropy} and
	let the empirical measures  $\brh_n$ and
	$\bsh_m$
	be generated by independent random variables $X_1,\dots, X_n $ and $Y_1 ,
	\dots Y_m$, respectively, all drawn from
	$\br = \bs$. 
	
	Then, with a Gaussian vector $\bm{G}\sim\mathcal{N}(0, \Sigma(\br))$ with $\Sigma(\br)$ as defined in
	\eqref{eq:def_sigma} we have the following.
	\begin{enumerate}[a)]
		\item \textbf{(One sample)} As $n\rightarrow\infty$, 
		\begin{equation}
		n^{\frac{1}{2p}} W_p(\brh_n, \br) \weak Z_{\calT,p}(\bG)
		\label{eq:weak_conv_trees_one}
		\end{equation}
		\item \textbf{(Two sample)} If  $n\wedge m\rightarrow\infty$ and
		$n/(n + m)\rightarrow\alpha \in [0,1]$ we have 
		\begin{equation}
		\left( \frac{nm}{n+m} \right)^{\frac{1}{2p}} W_p(\brh_n, \bsh_m)
		\weak Z_{\calT,p}(\bG).
		\label{eq:weak_conv_trees_two}
		\end{equation}
	\end{enumerate}
\end{theorem}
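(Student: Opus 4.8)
\emph{Proof proposal.} The plan is to reduce the two convergences to a single pathwise identity for the linear program appearing in Theorem~\ref{thm:distrlimit_one}. Since $\br=\bs$ and \eqref{eq:entropy} is assumed for $x_0=\rootT(\calT)$, both Theorem~\ref{thm:distrlimit_one}~a) and Theorem~\ref{thm:distrlimit_two}~a) apply, so the left-hand sides of \eqref{eq:weak_conv_trees_one} and \eqref{eq:weak_conv_trees_two} converge weakly to $\bigl\{\max_{\blambda\in\calS^*}\scalprod{\bG,\blambda}\bigr\}^{1/p}$ with $\calS^*$ as in \eqref{eq:dual_set} and $\bG\sim\calN(0,\Sigma(\br))$; the theorem then follows by applying $t\mapsto t^{1/p}$ and the continuous mapping theorem. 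Hence it suffices to show that, almost surely,
\[
\max_{\blambda\in\calS^*}\scalprod{\bG,\blambda}=\sum_{x\in\calX}\bigl|(S_\calT\bG)_x\bigr|\,d_\calT\bigl(x,\parent(x)\bigr)^p=Z_{\calT,p}(\bG)^p .
\]
Two preparatory facts, both descending from Lemma~\ref{lem:weak}, would be used throughout: $\bG$ is almost surely an element of $\ell^1_{d^p}$, so $\sum_x d^p(x,x_0)|G_x|<\infty$ a.s.; and, since $\sum_{x,x'}\Sigma(\br)_{x,x'}=0$, one gets $\sum_{x\in\calX}G_x=0$ a.s.\ by exhausting $\calX$ with finite sets and noting that the increments of the partial sums are $L^2$-summable, the limit being a centred Gaussian of variance $0$. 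Moreover $d_\calT(x_0,x')\ge d_\calT(x,x_0)>0$ whenever $x'\in\children(x)$ and $x\ne x_0$, so the subtree sums $(S_\calT\bG)_x=\sum_{x'\in\children(x)}G_{x'}$ converge absolutely a.s., and swapping the sum over $x$ with the sum over the edges of the path joining $x_0$ to $x$ gives $Z_{\calT,p}(\bG)^p\le\sum_x d_\calT(x_0,x)^p|G_x|<\infty$ a.s.

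For the lower bound $\max_{\blambda\in\calS^*}\scalprod{\bG,\blambda}\ge Z_{\calT,p}(\bG)^p$ I would write down an explicit feasible point, as in Thm.~5 of \cite{Sommerfeld2016} but now viewed inside $\ell^\infty_{d^{-p}}(\calX)$. With $x_0=y_0,\dots,y_k=x$ the unique path joining $x$ to the root ($y_{j-1}=\parent(y_j)$), set
\[
\lambda^*_x:=\sum_{j=1}^{k}\sgn\bigl((S_\calT\bG)_{y_j}\bigr)\,d_\calT\bigl(y_j,\parent(y_j)\bigr)^p .
\]
To check $\blambda^*\in\calS^*$, let $z$ be the nearest common ancestor of $x$ and $x'$: the terms along the shared portion of the two root-paths cancel, and together with $\sum_i a_i^p\le(\sum_i a_i)^p$ for $a_i\ge0$, $p\ge1$, this gives $|\lambda^*_x-\lambda^*_z|\le d_\calT(z,x)^p$ and similarly at $x'$, so $\lambda^*_x-\lambda^*_{x'}\le d_\calT(z,x)^p+d_\calT(z,x')^p\le\bigl(d_\calT(z,x)+d_\calT(z,x')\bigr)^p=d_\calT(x,x')^p$; taking $x'=x_0$ shows $|\lambda^*_x|\le d_\calT(x_0,x)^p$, i.e.\ $\normwinfty{\blambda^*}\le 1$. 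Expanding $\lambda^*_x$ over the edges of the path from $x_0$ to $x$ and interchanging the two sums (legitimate by $\sum_x|G_x|d_\calT(x_0,x)^p<\infty$) rewrites $\scalprod{\bG,\blambda^*}$ as $\sum_y\sgn((S_\calT\bG)_y)\,d_\calT(y,\parent(y))^p\sum_{x\in\children(y)}G_x$, and since $\sum_{x\in\children(y)}G_x=(S_\calT\bG)_y$ and $d_\calT(x_0,\parent(x_0))=0$ this equals $Z_{\calT,p}(\bG)^p$.

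For the matching upper bound, take $\blambda\in\calS^*$ arbitrary; the pairing $\scalprod{\bG,\blambda}=\sum_x G_x\lambda_x$ converges absolutely because $|\lambda_x|\le\normwinfty{\blambda}\,d^p(x,x_0)$ for $x\ne x_0$. Telescoping $\lambda_x-\lambda_{x_0}$ along the path to the root, using $\sum_x G_x=0$, and interchanging sums (the double series is dominated by $\sum_x|G_x|d_\calT(x_0,x)^p$ since $|\lambda_y-\lambda_{\parent(y)}|\le d_\calT(y,\parent(y))^p$) gives
\[
\scalprod{\bG,\blambda}=\sum_{y\in\calX}\bigl(\lambda_y-\lambda_{\parent(y)}\bigr)(S_\calT\bG)_y\le\sum_{y\in\calX}d_\calT\bigl(y,\parent(y)\bigr)^p\bigl|(S_\calT\bG)_y\bigr|=Z_{\calT,p}(\bG)^p .
\]
This closes the identity. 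The combinatorial heart --- the sign pattern defining $\blambda^*$ and the summation-by-parts along the tree --- is as in the finite case \cite{Sommerfeld2016}; the part I expect to be delicate is the countable-space bookkeeping, namely that $(S_\calT\bG)_x$, $Z_{\calT,p}(\bG)$ and the pairings are a.s.\ well defined, that $\blambda^*\in\ell^\infty_{d^{-p}}(\calX)$, and --- the one genuinely new point --- that $\sum_{x\in\calX}G_x=0$ a.s., which is what allows the constant shift to be dropped in the telescoping; all of this is supplied by \eqref{eq:entropy} through Lemma~\ref{lem:weak}.
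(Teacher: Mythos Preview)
Your approach is correct and parallels the paper's strategy: both invoke Theorems~\ref{thm:distrlimit_one} and~\ref{thm:distrlimit_two} to reduce everything to the pathwise identity $\max_{\blambda\in\calS^*}\scalprod{\bG,\blambda}=Z_{\calT,p}(\bG)^p$, construct the same explicit maximizer (the paper calls it $\bnu$, defined recursively by $\nu_x-\nu_{\parent(x)}=\sgn((S_\calT\bG)_x)\,d_\calT(x,\parent(x))^p$), and establish the matching upper bound. The genuine difference is in how the summation-by-parts is justified in the countable setting. The paper builds a sequence $\bmu_n$ of finite linear combinations of ``edge vectors'' $\tilde{\be}^{(x)}$ (with $\tilde{e}^{(x)}_x=d^{-p}(x,x_0)$, $\tilde{e}^{(x)}_{\parent(x)}=-d^{-p}(x,x_0)$, zero elsewhere) and argues $\normfp{\bG-\bmu_n}\to 0$, so that $\scalprod{\bG,\blambda}=\lim_n\scalprod{\bmu_n,\blambda}$ unfolds directly into $\sum_x(S_\calT\bG)_x(\lambda_x-\lambda_{\parent(x)})$ without ever isolating a constant term. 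You instead telescope $\lambda_x$ along the root path and invoke $\sum_x G_x=0$ a.s.\ to kill the $\lambda_{x_0}$ contribution. Your route is shorter and more transparent; the paper's approximation scheme packages the same cancellation into the convergence $\bmu_n\to\bG$ and so never has to name $\sum_x G_x$ explicitly.

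One point needs tightening: your justification of $\sum_x G_x=0$ a.s.\ via ``$L^2$-summable increments'' only delivers $L^2$ convergence of the partial sums, and since the $G_x$ are correlated this does not by itself yield a.s.\ convergence. A clean fix is the representation $G_x=B_x-r_xZ$ with independent $B_x\sim\calN(0,r_x)$ and $Z=\sum_{y}B_y$; the latter series converges a.s.\ by L\'evy's equivalence theorem because $\sum_x\mathrm{Var}(B_x)=\sum_x r_x=1<\infty$, and then $\sum_x G_x=Z-Z=0$ a.s.\ for any enumeration. Alternatively, argue by cases: if $x_0$ is isolated then \eqref{eq:entropy} forces $\sum_x\sqrt{r_x}<\infty$, whence $\sum_x|G_x|<\infty$ a.s.\ and the variance-zero argument applies directly; if $x_0$ is an accumulation point, combining $|\lambda_x|\le\normwinfty{\blambda}\,d^p(x,x_0)$ with $|\lambda_x-\lambda_{x_0}|\le d^p(x,x_0)$ and letting $x\to x_0$ forces $\lambda_{x_0}=0$ for every $\blambda\in\calS^*$, so the offending term never arises.
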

A rigorous proof of Thm. \ref{THM:TREES} is given in Appendix \ref{sec:proof_tree}.

The same result was derived in \cite{sommerfeld_inference_2018} for finite spaces. For $\calX$ countable we require a different technique of proof. Simplifying the set of dual solutions in the same way, the second step of rewriting the target function with a summation and difference operator does not work in the case of measures with countable support, since the inner product of the operators applied to the parameters is no longer well defined. For this setting we need to  introduce a new basis in $\ell^1_{d^p}(\calX)$ and for each element $\bmu \in \ell^1_{d^p}(\calX)$ a sequence which has only finitely many non-zeros that converges to $\bmu$ in order to obtain an upper bound on the optimal value. Then, we define a feasible solution for which this upper bound is attained.

\begin{rem}
	In case that the support is not full we can generate a weighted tree for the support points in the following way. If $x$ is not in the support of $\br$ we delete $x$ and connect $\parent(x)$ to all nodes in the set $A_{+1} (x) = \set{x' \in \calX \colon \parent(x') = x}$ with edges that have the length of the sum of the edge joining $x$ and $\parent(x)$ and the edge joining $x' \in A_{+1}$ and $x$. Then, we can use the same arguments as in the case of full support to derive the explicit limit on the restricted tree. This is an upper bound of the limiting distribution on the full tree with non full support. See Figure \ref{fig:tree_supp} for an illustration.
	\begin{figure}
		 \subfigure[full tree]{
			\begin{tikzpicture}
			\draw[red] (0,0) 
			-- (1.5,1.5)
			node [circle, fill = black, scale = 0.5, label = above:{\textcolor{black}{$root$}}]{}
			node at (0.5,1) {$w_1$};
			\draw (1.5,1.5)
			-- (3,0)
			node [circle, fill = black, scale = 0.5]{}
			-- (3.5,-1.5)
			node [circle, fill = black, scale = 0.5]{}
			-- (3.5, -2.5)
			(3.5,-2.7) node {$\vdots$};
			\draw (3,0) -- (2.5, - 1.5)
			node [circle, fill = black, scale = 0.5]{}
			-- (2.5, -2.5)
			(2.5,-2.7) node {$\vdots$};
			\draw[red] (0,0) -- (-0.5,-1.5)
			node [circle, fill = black, scale = 0.5]{}
			node at (-0.5,-0.7) {$w_2$};
			\draw (-0.5,-1.5)
			-- (-0.5, -2.5)
			(-0.5,-2.7) node {$\vdots$};
			\draw[red] (0,0) -- (0.5,-1.5)
			node [circle, fill = black, scale = 0.5]{}
			node[shape = circle, draw, fill = white, scale = 0.5] at (0,0) {}
			node at (0.5,-0.7) {$w_3$};
			\draw (0.5,-1.5)
			-- (0.5, -2.5)
			(0.5,-2.7) node {$\vdots$};
		\end{tikzpicture}
	}
		 \subfigure[tree reduced to support]{
		 	\begin{tikzpicture}
		 	\draw[red] (-0.5,-1.5) 
		 	-- (1.5,1.5)
		 	node [circle, fill = black, scale = 0.5, label = above:{\textcolor{black}{$root$}}]{}
		 	node at (-0.3,0) {$w_1 + w_2$};
		 	\draw (1.5,1.5)
		 	-- (3,0)
		 	node [circle, fill = black, scale = 0.5]{}
		 	-- (3.5,-1.5)
		 	node [circle, fill = black, scale = 0.5]{}
		 	-- (3.5, -2.5)
		 	(3.5,-2.7) node {$\vdots$};
		 	\draw (3,0) -- (2.5, - 1.5)
		 	node [circle, fill = black, scale = 0.5]{}
		 	-- (2.5, -2.5)
		 	(2.5,-2.7) node {$\vdots$};
		 	\draw
		 	node[circle, fill = black, scale = 0.5] at (-0.5,-1.5){};
		 	\draw (-0.5,-1.5) -- (-0.5, -2.5)
		 	node at (-0.5,-2.7) {$\vdots$};
		 	\draw (0.5,-1.5) -- (0.5, -2.5)
		 	node at (0.5,-2.7) {$\vdots$}
		 	node[circle, fill = black, scale = 0.5] at (0.5, -1.5) {} ;
		 	\draw[red] (1.5,1.5) -- (0.5,-1.5)
		 	node at (1.7,0) {$w_1 + w_3$};
		 	\end{tikzpicture}

}
			\label{fig:tree_supp}
			\caption{Schematic for the reduction of $\calX$ to the support of $\br$. Solid circles indicate support points, hollow circles elements which are not in the support.}
	\end{figure}
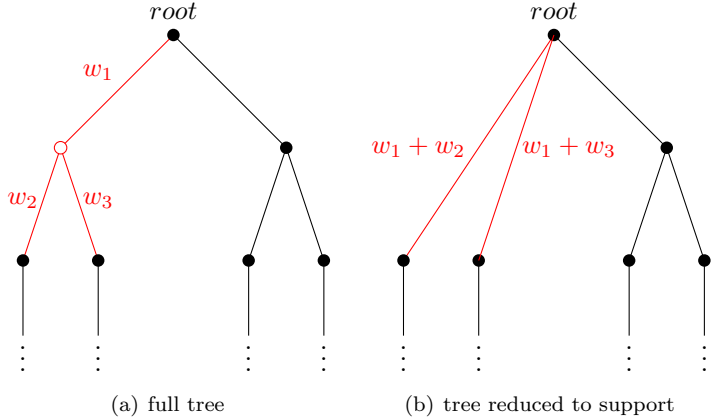
\label{rem:non_full_support_tree}
\end{rem}

\subsection{Distributional Bound for the Limiting Distribution}
\label{sub:bound}
In this section we use
the explicit formula on the r.h.s. of \eqref{eq:weak_conv_trees_one} for the case of  tree metrics 
to stochastically bound the limiting distribution on a general space $\calX$ which is not a tree. 

This is based on the following simple observation: Let $\calT$ be a spanning tree of
$\calX$ and $d_\calT$ the tree metric generated by $\calT$ and the weights $(x,x')\mapsto
d(x,x')$ as described in Section \ref{sub:explicit_tree}. Then for any $x,x'\in\calX$ we
have $d(x,x')\leq d_\calT(x,x')$. Let $\calS_{\calT}^*$ denote the set defined in
$\eqref{eq:dual_set}$ with the metric $d_\calT$ instead of $d$. Then $\calS^*\subset
\calS_{\calT}^*$ and hence
\[
\max_{\blambda \in\calS^*} \scalprod{\bv,\blambda} \leq \max_{\blambda\in\calS_{\calT}^*}
\scalprod{\bv,\blambda}
\]
for all $\bv\in\ell^1_{d^p}(\calX)$.
It follows that
\begin{equation}
\max_{\blambda \in\calS^*} \scalprod{\bv,\blambda} \leq Z_{\calT,p}(\bv)
\label{eq:upper_bound}
\end{equation}
for all $\bv\in\ell^1_{d^p}(\calX)$ and
this  proves the following main result of this subsection, which is stated for the case, when $\br$ and $\bs$ are both estimated from data. The one-sample case is analogous. 
\begin{theorem}
	\label{thm:convex_strategy}
	Let $\br, \bs\in\calP_p(\calX)$, assume that $\br, \bs$ fulfill condition \eqref{eq:entropy} and let $\brh_n$, $\bsh_m$ be generated by i.i.d.
	$X_1, \dots, X_n\sim \br$ and $Y_1,\dots,Y_m\sim \bs$, respectively. Let
	further $\calT$ be a spanning tree of $\calX$. Then,
	if $\br = \bs$ we have, 
	as $n$ and
	$m$ approach infinity such that $n\wedge m\rightarrow\infty$ and
	$n/(n + m)\rightarrow\alpha$, that 
	\begin{equation}
	\begin{split}
	\limsup_{n,m\rightarrow\infty}  P&\left[ \left( \frac{nm}{n + m} \right)^{1/2p} W_p(\brh_n, \bsh_m) \geq z
	\right] \leq
	P\left[ Z_{\calT,p}(\bG) \geq z \right],  
	\end{split}
	\label{eq:convex_strategy}
	\end{equation}
	where $\bG \sim\mathcal{N}(0,\Sigma(\br))$  with
	$\Sigma(\br)$ as
	defined in \eqref{eq:def_sigma}.
\end{theorem}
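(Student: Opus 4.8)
The plan is to combine the distributional limit of Theorem~\ref{thm:distrlimit_two}~a) with the deterministic majorization \eqref{eq:upper_bound}, pushing the resulting pathwise inequality through the weak convergence by the Portmanteau theorem.

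First I would invoke Theorem~\ref{thm:distrlimit_two}~a): since $\br=\bs$, both measures satisfy \eqref{eq:entropy}, and the hypothesis $n/(n+m)\to\alpha$ is equivalent to $m/(n+m)\to 1-\alpha$ (the null limit not depending on $\alpha$ in any case), so
\[
\left(\frac{nm}{n+m}\right)^{1/2p} W_p(\brh_n,\bsh_m)=\rho_{n,m}^{1/p}W_p(\brh_n,\bsh_m)\weak X:=\Big\{\max_{\blambda\in\calS^*}\scalprod{\bG,\blambda}\Big\}^{1/p},
\]
where $\bG\sim\calN(0,\Sigma(\br))$ is, by Lemma~\ref{lem:weak} together with \eqref{eq:entropy}, a random element of $\ell^1_{d^p}(\calX)\subset\R^{\calX}$. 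Because $\bnull\in\calS^*$ (the constraints $\lambda_x-\lambda_{x'}\le d^p(x,x')$ are trivially met by $\blambda=\bnull$), the inner maximum is nonnegative, so $X$ is a well-defined nonnegative random variable and taking a $p$-th root is legitimate.

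Next I would apply the deterministic bound \eqref{eq:upper_bound} pathwise with $\bv=\bG(\omega)\in\R^{\calX}$, which gives $\max_{\blambda\in\calS^*}\scalprod{\bG,\blambda}\le Z_{\calT,p}(\bG)^p$ almost surely; raising both sides to the power $1/p$ (monotone on $[0,\infty]$) yields $X\le Z_{\calT,p}(\bG)$ a.s., where $Z_{\calT,p}(\bG)$ is read as a $[0,\infty]$-valued random variable (on the event where it is $+\infty$ the bound is trivial). Monotonicity of probability then gives $P[X\ge z]\le P[Z_{\calT,p}(\bG)\ge z]$ for every $z$.

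Finally, since the half-line $[z,\infty)$ is closed in $\R$, the Portmanteau theorem applied to the weak convergence above gives $\limsup_{n,m\to\infty}P\big[(\tfrac{nm}{n+m})^{1/2p}W_p(\brh_n,\bsh_m)\ge z\big]\le P[X\ge z]$, and chaining this with the previous display proves \eqref{eq:convex_strategy}. I do not expect a genuine obstacle here; the only points that demand care are checking $\bnull\in\calS^*$ so that $X\ge 0$ and the $p$-th-root step is valid, and using the closed-set form of the Portmanteau theorem (rather than convergence of distribution functions at continuity points) so that the $\limsup$ inequality holds at every threshold $z$, including atoms of the limit law.
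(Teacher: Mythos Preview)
Your proof is correct and follows precisely the route the paper takes: the paper establishes the deterministic inequality \eqref{eq:upper_bound} from the inclusion $\calS^*\subset\calS_{\calT}^*$ and then declares that ``this proves'' Theorem~\ref{thm:convex_strategy}, leaving the passage via Theorem~\ref{thm:distrlimit_two}~a) and the Portmanteau argument implicit. You have simply spelled out those implicit steps (including the verification that $\bnull\in\calS^*$ so the $p$-th root is well-defined, and the use of the closed-set form of Portmanteau to obtain the $\limsup$ at every $z$), so the two arguments are essentially identical.
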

\begin{rem}
		While the stochastic bound of the limiting distribution $Z_{\calT,p}$ is very fast
		to compute as it is explicitly given, the Wasserstein distance $W_p(\brh_n, \bsh_m)$ in \eqref{eq:convex_strategy} is a
		computational bottleneck. 
		Classical general-purpose approaches, e.g., the simplex algorithm
		\citep{Luenberger2008} for general linear programs or the auction algorithm for
		network flow problems \citep{bertsekas_auction_1992, bertsekas_auction_2009}
		were found to scale rather poorly  to very large problems
		such as image retrieval \citep{rubner_earth_2000}.
		
		Attempts to solve this problem include specialized algorithms
		\citep{Gottschlich2014} and approaches leveraging additional geometric structure
		of the data \citep{ling_efficient_2007, schmitzer_sparse_2016}.
		However, many practical problems still fall outside the scope of
		these methods \citep{schrieber_dotmark_2017}, prompting the development of numerous surrogate quantities which
		mimic properties of optimal transport distances and are amenable to efficient
		computation. Examples include \cite{pele_fast_2009,shirdhonkar_approximate_2008,
		bonneel_sliced_2015} and the particularly successful entropically regularized
		transport distances \citep{Cuturi2013a,solomon_convolutional_2015}.
		 
\end{rem}

In the next section we will discuss how to approximate the countable space $\calX$ by a finite collection of points. Note, that the distributional bound in Thm. \ref{thm:convex_strategy} also holds on any finite collection of points. For a simulation study regarding this upper bound see \cite{tameling_computational_2018}.

\section{Computational strategies for simulating the limit laws}
\label{sec:rates}

If we want to simulate the limiting distributions in Thm. \ref{thm:distrlimit_one} and \ref{thm:distrlimit_two} we need to restrict to a finite number $N$ of points, i.e., we choose a subset $I$ of $\calX$ such that $\#I = N$. Let $\br \in \calP_p(\calX)$ with full support (see Remark \ref{rem:support} for the general case), satisfying \eqref{eq:entropy}. For $\bG \sim \calN(0,\Sigma(\br))$, we define $\bG^I = (G^I)_x = G_x\one_{\left\lbrace x \in I\right\rbrace }$. Then, an upper bound for the difference between the exact limiting distribution and the limiting distribution on the finite set $I$ in the one sample case for $\br = \bs$ is given as (see \eqref{eq:upper_bound})

\begin{equation}
\begin{aligned}
\abs{\max_ {\blambda \in \calS^*} \scalprod{\bG^I,\blambda} - \max_ {\blambda \in \calS^*} \scalprod{\bG, \blambda}} &\leq
\max_ {\blambda \in \calS^*} \abs{ \scalprod{\bG^I,\blambda} - \scalprod{\bG, \blambda}} \\
&\leq  \max_ {\blambda \in \calS_{\calT}^*} \abs{\scalprod{\bG^I - \bG, \blambda}}\\
& = \max\Big\{ \max_ {\blambda \in \calS_{\calT}^*} \scalprod{\bG^I - \bG, \blambda},  \\ &\hspace{10ex}\max_ {\blambda \in \calS_{\calT}^*} \scalprod{\bG - \bG^I, \blambda} \Big\} \\
 &= \sum_{
	x \in \calX} |(S_\calT (\bG^I - \bG))_x| d_\calT(x,\parent(x))^p\\
&= \sum_{x \notin I} \abs{G_x} d_\calT(x,\rootT(\calT))^p .
\end{aligned}
\label{eq:finite_approx1}
\end{equation}

For the last equality one needs to construct the tree as follows: Choose $I$ such that $x_0$ from condition \eqref{eq:entropy} is an element of $I$ and choose $x_0$ to be the root of the tree and let all other elements of $\calX$ be direct children of the root, i.e., $\children(x) = x$ for all $x \neq \rootT(\calT) \in \calX$. The upper bound can be made stochastically arbitrarily small as 
\begin{equation}
\mean\left[ \sum_{x \notin I} \abs{G_x} d_\calT(x,\rootT(\calT))^p\right] \leq   \sum_{x \notin I} d_\calT(x,\rootT(\calT))^p \sqrt{r_x(1-r_x)},
\label{eq:finite_approx} 
\end{equation}
where we used Hölder's inequality and the definition of $\Sigma(\br)$. As the root was chosen to be $x_0$, the sum above is finite as $\br$ fulfills condition \eqref{eq:entropy} and becomes arbitrarily small for $I$ large enough. Hence, \eqref{eq:finite_approx} details that the speed of approximation by $\bG^I$ depends on the decay of $\br$ and suggests to choose $I$ such that most of the mass of $\br$ is concentrated on it. \\

\begin{rem}
	In case that the support of $\br$ is not full, we have to optimize over the set $\calS^*(\br)$ given in \eqref{eq:dual_set}. In this case we can derive the same upper bound as in \eqref{eq:finite_approx1} with the only change that we sum over all $x \in \supp(\br)$ in the second last line of \eqref{eq:finite_approx1} and that our set $I$ has to be a subset of the support of $\br$.
	\label{rem:support}
\end{rem}

The computation of $\max_ {\blambda \in \calS^*} \scalprod{\bG^I,\blambda}$ is a linear program with $N^2$ constraints and $N$ variables. General purpose network flow algorithms such as the auction algorithm, Orlin's algorithm or general purpose LP solvers are required for the computation of this linear problem. These algorithms have at least cubic worst case complexity \citep{bertsekas_new_1981, orlin_faster_1993} and quadratic memory requirement and its average runtime is much worse than $\mathcal{O}(N^2)$ empirically \citep{Gottschlich2014}. 
This renders a naive Monte-Carlo approach to obtain quantiles computational infeasible for large $N$. In the following subsections we therefore discuss possibilities to make the computation of the limit more accessible.

\subsection{Thresholded Wasserstein distance}
\label{sub:convex}

Following \cite{pele_fast_2009} we define for a thresholding parameter $t\geq 0$  the thresholded metric
\begin{equation}
  d_t(x,x') = \min\left\{ d(x,x'), t \right\}.
  \label{eq:def_dt}
\end{equation}
Then, $d_t$ is again a metric. Let
$\Wt_p(\br, \bs)$ be the Wasserstein distance with respect to 
$d_t$. Since $d_t(x,x')\leq d(x,x')$ for all $x,x'\in\calX$ we have that
$\Wt_p(\br, \bs) \leq W_p(\br, \bs)$ for all $\br, \bs\in\calP (\calX)$ and all
$t\geq 0$. 

\begin{theorem}
	\label{thm:threshold}
	The limiting distribution from Thm. \ref{thm:distrlimit_one_a} with the thresholded ground distance $d_t$ instead of $d$ can be computed in $\mathcal{O}(N^2\log N)$ time with $\mathcal{O}(N)$ memory requirement, if each point in $\calX$ has $\mathcal{O}(1)$ neighbors with distance smaller or equal to $t$. The limiting distribution can be calculated as the optimal value of the following network flow problem: 
	\begin{equation}
	\label{eq:threshold_limdist}
	\begin{aligned}
	\min_{\bw \in \R^{\calX \times \calX}_+} - \sum_{x,x' \in \calX}d_t^p(x,x')w_{x,x'} \\
	\text{subject to }\sum_{\tilde{x}\in \calX, \tilde{x} \neq x} w_{\tilde{x},x} - \sum_{x' \in \calX, x' \neq x} w_{x,x'} = G_x,
	\end{aligned}
	\end{equation}
	
	where $\bG = (G_x)_{x \in  \calX}$ is a Gaussian process with mean zero and covariance structure as defined in \eqref{eq:def_sigma}.
\end{theorem}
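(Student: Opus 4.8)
The plan is to identify the limiting law with the optimal value of a minimum-cost flow problem via linear programming duality, and then to exploit the combinatorial structure of the thresholded metric so that a standard flow algorithm computes it within the stated budget.

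First I would observe that $d_t$ from \eqref{eq:def_dt} is again a metric with $d_t\le d$ pointwise, hence $d_t^p(x,x')\le d^p(x,x')$ for all $x,x'$, so every hypothesis of Theorem~\ref{thm:distrlimit_one} transfers from $d$ to $d_t$ (membership in $\calP_p(\calX)$ and condition \eqref{eq:entropy} only become weaker). Theorem~\ref{thm:distrlimit_one}, part a), applied with $d_t$ then gives that the $p$-th power of the limiting distribution equals $\max_{\blambda\in\calS^*_t}\scalprod{\bG,\blambda}$, where $\calS^*_t$ is the set \eqref{eq:dual_set} formed with $d_t^p$ in place of $d^p$. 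In the finite setting --- the one relevant for the complexity statement --- this is the value of a finite linear program, and strong LP duality identifies it with the uncapacitated minimum-cost flow on the complete digraph over $\calX$ with arc costs $d_t^p(x,x')$ and node divergences $G_x$, which, in the notation of \eqref{eq:threshold_limdist}, is exactly that network flow problem; it is feasible because $\sum_x G_x=0$ almost surely. For infinite $\calX$ one first restricts to a finite $I\subset\calX$ with $\#I=N$ as in Section~\ref{sec:rates}, the incurred error being controlled by \eqref{eq:finite_approx}. This establishes the "optimal value of a network flow problem" part.

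For the complexity claim I would assume $\#\calX=N$. The network in \eqref{eq:threshold_limdist} has $\Theta(N^2)$ arcs and cannot be stored in $\calO(N)$ memory as is, so I would sparsify it. Split the arcs into \emph{near} arcs $\{(x,x'):d(x,x')\le t\}$, of which there are only $\calO(N)$ by hypothesis, carrying cost $d^p(x,x')\le t^p$, and \emph{far} arcs $\{(x,x'):d(x,x')>t\}$, all carrying the same cost $t^p$. Build a reduced network $\mathcal{G}'$ on the $N$ original nodes plus one auxiliary node $\star$: keep every near arc with its cost, and add arcs $x\to\star$ and $\star\to x$ of cost $t^p/2$ for each $x$; then $\mathcal{G}'$ has $N+1$ nodes and $\calO(N)$ arcs. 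The key lemma to prove is that the minimum-cost flow value on $\mathcal{G}'$ --- with divergence $G_x$ at the original nodes and $0$ at $\star$ --- equals the value of \eqref{eq:threshold_limdist}. For "$\le$", take an optimal flow of \eqref{eq:threshold_limdist}, leave the near arcs untouched, and reroute the flow on each far arc $(x,x')$ along $x\to\star\to x'$ at the identical cost $t^p/2+t^p/2=t^p$, which keeps $\star$ balanced. For "$\ge$", decompose an optimal flow of $\mathcal{G}'$ into simple path- and cycle-flows, discard the cycle-flows (nonnegative cost), and in each path replace the unique segment $x\to\star\to x'$ by the direct arc $(x,x')$ of cost $d_t^p(x,x')\le t^p$, producing a feasible flow of \eqref{eq:threshold_limdist} of no larger cost.

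Finally, a minimum-cost flow on $\mathcal{G}'$ can be computed by successive shortest paths with node potentials --- each augmenting path found by a Dijkstra run in $\calO(N\log N)$ time, and $\calO(N)$ augmentations suffice since each zeroes out one node's residual surplus or deficit --- yielding $\calO(N^2\log N)$ time and $\calO(N)$ memory in total, as in \cite{pele_fast_2009}; and a realisation of $\bG\sim\calN(0,\Sigma(\br))$ on the $N$ points can itself be drawn in $\calO(N)$ space, e.g.\ as $G_x=\sqrt{r_x}\,Z_x-r_x\sum_y\sqrt{r_y}\,Z_y$ with i.i.d.\ standard normal $Z_y$, which has covariance \eqref{eq:def_sigma}. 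I expect the main obstacle to be the key lemma of the sparsification step: that collapsing all far arcs into a single hub $\star$ leaves the optimal value unchanged. Its "$\ge$" direction hinges on $\star$ having zero divergence, so that in any flow decomposition every visit to $\star$ is a matched in/out pair $x\to\star\to x'$ realising a direct transfer of cost exactly $t^p$ --- matching the cost of each far arc --- whereas its "$\le$" direction uses only $d_t^p(x,x')\le t^p$. A secondary point is the strong-duality identification above, which in the infinite-dimensional case is better reduced to the finite case via the truncation of Section~\ref{sec:rates} than argued directly.
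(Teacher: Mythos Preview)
Your proposal is correct and follows essentially the same route as the paper: apply Theorem~\ref{thm:distrlimit_one} with $d_t$, invoke strong LP duality to pass to the network-flow formulation \eqref{eq:threshold_limdist}, then sparsify by routing all cost-$t^p$ arcs through a single virtual node and appeal to \cite{pele_fast_2009} for the $\calO(N^2\log N)$ time, $\calO(N)$ space bound. The paper's own proof is terser---it simply asserts the virtual-node reduction and cites \cite{pele_fast_2009}---whereas you supply the two-sided equivalence argument for the hub construction, spell out the successive-shortest-paths algorithm, and note how to sample $\bG$ in linear space; these are welcome details but not a different approach.
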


\begin{proof}
	We take a finite approximation $\br_N$ of $\br$ and reduce our space $\calX$ to the support of $\br_N$ which should be exactly $N$ points. If we take the thresholded distance as the ground distance similar as in Theorem \ref{thm:distrlimit_one} we obtain the limiting distribution as
	\[ \left\lbrace \max_{\blambda \in \calS_t^*}  \scalprod{\bG,\blambda}\right\rbrace ^{1/p}, \]
	where now $\calS_t^* = \set{\blambda  \in \R^N : \lambda_x - \lambda_{x'} \leq d_t^p(x,x')}$. The $p$-th power of the limiting distribution is again a finite dimensional linear program and since there is strong duality in this case, it is equivalent to solve \eqref{eq:threshold_limdist}.
	As the linear program \eqref{eq:threshold_limdist} is a network flow problem, we can redirect all edges with length $t$ through a
	virtual node without changing the optimal value. From the assumption that each point has $\mathcal{O}(1)$ neighbors with distance not equal to $t$, we can deduce that the number of edges ($N^2$ in the original problem) is reduced to $\mathcal{O}(N)$. According to \cite{pele_fast_2009} the new linear program with the virtual node can be solved in $\mathcal{O}(N^2\log N)$ time with $\mathcal{O}(N)$ memory requirement.
\end{proof}

\begin{rem}
	
	\begin{itemize}
		\item[a)] The resulting network-flow problem can be tackled with existing efficient
		solvers \citep{bertsekas_auction_1992} or commercial solvers like $CPLEX$ (https://www.ibm.com/jm-en/marketplace/ibm-ilog-cplex) which exploit the network structure.
		\item[b)] For the distributional bound \eqref{eq:convex_strategy} one can also use the thresholded Wasserstein distance $\Wt_p$ instead of $W_p$ to be computational more efficient. A large threshold $t$ will result in a better
		approximation of the true Wasserstein distance, but will also require more computation time. 
	\end{itemize}
	
\end{rem}

\subsection{Regular Grids}
\label{sub:regular_grids_asymp}
In this section we are going to derive an explicit formula for the distributional bound from Section \ref{sub:bound}, when the support of $\br$ is a regular grid of $L^D$ points in the unite hypercube $[0,1]^D$. Here, $D$ is a positive integer and $L$ a power of two. In this case a spanning tree can be constructed from a dyadic partition.
The general case is analogous, but more cumbersome.  
For $0\leq l \leq l_{\max}$ with 
\[
l_{\max} = \log_2 L
\] 
let $P_l$ be the natural partition of $\supp(\br)$ into $2^{Dl}$ squares of each
$L^D / 2^{Dl}$ points.
\begin{theorem}
	Under the assumptions described above, \eqref{eq:convex_strategy} reads
	\begin{equation}
	Z_{\calT,p}(\bu) =\left\{ \sum_{l=0}^{l_{\max}} D^{p/2} 2^{-p(l+1)} \sum_{F\in
		P_l} |S_F \bu|  \right\}^{1/p}. 
	\label{eq:bound_grid}
	\end{equation}
	This expression can be evaluated efficiently (in $L^D\log_2 L$ operations) and used with Theorem
	\ref{thm:convex_strategy} to obtain a stochastic bound of the limiting distribution on regular grids. 
\end{theorem}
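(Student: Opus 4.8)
The plan is to exhibit \eqref{eq:bound_grid} as the explicit value of the functional $Z_{\calT,p}$ from \eqref{eq:def_ZT} for one particular tree $\calT$, the dyadic tree attached to the hierarchy $P_0,P_1,\dots,P_{l_{\max}}$, and then to invoke Theorem \ref{thm:convex_strategy}. \emph{First} I would fix $\calT$: its vertex set contains one vertex $v_F$ for each dyadic cell $F\in\bigcup_{l=0}^{l_{\max}}P_l$, namely the centre of $F$. Since every cell of $P_{l_{\max}}$ consists of a single grid point, the leaves of $\calT$ are exactly $\calX$; the finitely many centres at levels $l<l_{\max}$ are auxiliary vertices (one may instead collapse each cell to a grid-point representative to obtain a genuine spanning tree of $\calX$, at the cost of messier edge-length bookkeeping). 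I set $\rootT(\calT)=v_{[0,1]^D}$, and for $F\in P_l$ with $l\geq 1$ I let $\parent(v_F)=v_{F'}$, where $F'\in P_{l-1}$ is the unique parent cell with $F\subset F'$; edges are weighted, as Theorem \ref{thm:convex_strategy} requires, by the Euclidean distance between their endpoints. By construction $\children(v_F)$ in the sense of \eqref{eq:def_ZT} (the subtree below $v_F$) is exactly the set of grid points contained in $F$.

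\emph{Next} I would compute the two ingredients of $Z_{\calT,p}$. For the operator, the previous remark gives $(S_\calT\bu)_{v_F}=\sum_{x\in F}u_x=S_F\bu$ for every cell $F$. For the edge weights, an elementary computation with dyadic cells shows that the centre of a cell $F\in P_l$ and the centre of its parent $F'\in P_{l-1}$ differ by $\pm 2^{-(l+1)}$ in each of the $D$ coordinates, hence $d_\calT(v_F,\parent(v_F))=\sqrt{D}\,2^{-(l+1)}$; in particular all edges incident to level-$l$ vertices carry the same weight. Substituting both facts into \eqref{eq:def_ZT} and organising the sum by level gives
\[
Z_{\calT,p}(\bu)^{p}=\sum_{l=0}^{l_{\max}}\sum_{F\in P_l}\bigl(\sqrt{D}\,2^{-(l+1)}\bigr)^{p}\,\bigl|S_F\bu\bigr|=\sum_{l=0}^{l_{\max}}D^{p/2}\,2^{-p(l+1)}\sum_{F\in P_l}|S_F\bu|,
\]
which is exactly \eqref{eq:bound_grid}. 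The level $l=0$ corresponds to the root, whose edge to itself has length $0$; this is harmless because the term $D^{p/2}2^{-p}\,|\sum_x u_x|$ vanishes on the zero-sum vectors to which $Z_{\calT,p}$ is applied, in particular on $\bG$ (where $\sum_x G_x=0$ almost surely).

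\emph{Finally} I would check that this $\calT$ is admissible for Theorem \ref{thm:convex_strategy}, i.e. $d(x,x')\leq d_\calT(x,x')$ for all grid points $x,x'$: the tree path between $x$ and $x'$ passes through $v_{F^{\ast}}$, where $F^{\ast}$ is the smallest dyadic cell containing both points, and summing the geometrically decreasing edge weights along this path telescopes to the diameter of $F^{\ast}$ over its grid points, which is $\geq d(x,x')$. Hence \eqref{eq:upper_bound} applies, and Theorem \ref{thm:convex_strategy} yields, under $\br=\bs$ and $n\wedge m\to\infty$ with $n/(n+m)\to\alpha$, that $\limsup_{n,m}P\bigl[(nm/(n+m))^{1/2p}W_p(\brh_n,\bsh_m)\geq z\bigr]\leq P[Z_{\calT,p}(\bG)\geq z]$ with $\bG\sim\mathcal{N}(0,\Sigma(\br))$; the displayed identity then rewrites the right-hand side in the asserted form \eqref{eq:bound_grid}. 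The one genuinely delicate point I anticipate is the bookkeeping around the tree geometry — making the auxiliary centre-vertices precise, pinning down the coordinate offset $2^{-(l+1)}$ that produces the clean constant $D^{p/2}2^{-p(l+1)}$, and verifying $d_\calT\geq d$; the ``general case'' mentioned in the statement ($L$ not a power of two, or non-cubic grids) runs along the same lines with cell diameters in place of exact powers of two, which is exactly why it is merely cumbersome rather than different.
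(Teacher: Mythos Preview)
Your proposal is correct and follows essentially the same construction as the paper: augment $\calX$ by the cell-centres, build the dyadic tree via the inclusion relation of the $P_l$, compute $d_\calT(v_F,\parent(v_F))=\sqrt{D}\,2^{-(l+1)}$ and $(S_\calT\bu)_{v_F}=S_F\bu$, and substitute into \eqref{eq:def_ZT}. Your treatment is in fact a little more careful than the paper's, since you flag the harmless $l=0$ term and explicitly verify $d\leq d_\calT$ (the paper leaves the latter to the general triangle-inequality observation of Section~\ref{sub:bound}, which already covers trees with Euclidean edge weights).
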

 
\begin{proof}
	Define $\supp(\br)'$ by adding to $\supp(\br)$ all center-points of sets in $P_l$ for $0\leq l
	<l_{\max}$. We identify center points of $P_{l_{\max}}$ 
	with the points in $\supp(\br)$. A tree with vertices $\supp(\br)'$ can now be build using the
	inclusion relation of the sets $\left\{ P_l \right\}_{0\leq l \leq l_{\max}}$
	as ancestry relation. More precisely, the leaves of the tree are the
	points of $\supp(\br)$ and the parent of the center point of $F\in P_l$ is the center
	point of the unique set in
	$P_{l-1}$ that contains $F$. 
	
	If we use the Euclidean metric to define
	the distance between neighboring vertices we get
	\[
	d_\calT(x,\parent(x)) = \frac{\sqrt{D}2^{-l}}{2},
	\]
	if $x\in P_l$.
	
	A measure $\br$ naturally extends to a measure on $\supp(\br)'$ if we give zero
	mass to all inner vertices. We also denote this measure by $\br$. Then, if
	$x\in\supp(\br)'$ is the center point of the set
	$F\in P_l$ for some $0\leq l \leq l_{\max}$, we have that $(S_\calT \br)_x = S_F
	\br$ where $S_F\br = \sum_{x\in F} r_x$. Inserting this two formulas into \eqref{eq:convex_strategy} yields \eqref{eq:bound_grid}.
\end{proof}

\section{Application: Single-Marker Switching Microscopy} \quad\\
\label{sec:appl_SMS}
Single Marker Switching (SMS) Microscopy
\citep{betzig_imaging_2006,rust_subdiffractionlimit_2006,
egner_fluorescence_2007, heilemann_subdiffractionresolution_2008,folling_fluorescence_2008} 
is a living cell fluorescence microscopy technique
in which fluorescent markers which are tagged to a protein structure in the probe are stochastically switched from a no-signal
giving (off) state into a signal-giving (on) state. A marker in the on state
emits a bunch of  photons some of which are detected on a detector before
it is either switched off or bleached. From the photons registered on
the detector, the position of the marker (and hence of the protein) can be determined. The final image is
assembled from all observed individual positions recorded in a sequence of time intervals (frames) in a position histogram, typically a
pixel grid. 

SMS microscopy is based on the principle that at any given time
only a very small number of markers are in the on state. As the probability of switching from the off to the on state is small for each
individual marker and they remain in the on state only for a very short
time (1-100ms). This allows SMS microscopy to resolve features below the
diffraction barrier that limits conventional far-field microscopy
(see \cite{hell_farfield_2007} for a survey) because with overwhelming probability at most one
marker within a diffraction limited spot is in the on state \citep{aspelmeier_modern_2015}. At the same time
this requires quite long acquisition times (1min-1h) to guarantee sufficient sampling of the probe. As a consequence, if the
probe moves during the acquisition, the final image will be blurred.

Correcting for this drift and thus improving image quality is an area of active
research
\citep{geisler_drift_2012,deschout_precisely_2014,hartmann_drift_2014}. In order to investigate the validity of such a drift correction method we introduce a test of the Wasserstein distance between the image obtained from the first half of the recording time and the second half. This test is based on the distributional upper bound of the limiting distribution which was developed in Section \ref{sub:bound} in combination with a lower bound of the Wasserstein distance \citep{pele_fast_2009}. In fact, there is no standard method for problems of this kind and we argue that the (thresholded) Wasserstein distance is particular useful in such a situation as the specimen moves between the frames without loss of mass, hence the drift induces a transport structure between successive frames. In the following we compare the distribution from the first half of frames with the distribution from the second half scaled with the sample sizes (as in \eqref{eq:weak_conv_trees_two}). We reject the hypothesis that the distributions from the first and the second half are the same, if our test statistic is larger than the $1-\alpha$ quantile of the distributional bound of the limiting distribution in \eqref{eq:convex_strategy}. If we have statistical evidence that the thresholded Wasserstein distance is not zero, we can also conclude that there is a significant difference in the Wasserstein distance itself.

\paragraph{Statistical Model}
It is common to assume the bursts  of photons registered on the detector as
independent realizations
of a random variable with a density that is proportional to the density of
markers in the probe \citep{aspelmeier_modern_2015}. As it is expected that the probe drifts during the acquisition this density
will vary over time. In particular, the positions registered at the beginning of
the observation will follow a different distribution than those observed at the
end. 
\paragraph{Data and Results}
We consider an SMS image of  a tubulin structure presented in
\cite{hartmann_drift_2014} to assess their drift correction method. This image is recorded in 40.000 single frames over a total recording time of 10 minutes (i.e., 15 ms per frame). We compare
the aggregated sample collected during the first $50\%$ ($\hat{=}$ 20.000 frames) of the total
observation time with the aggregated sample obtained in the last $50\%$ on a $256\times
256$ grid  for both the
original uncorrected values and for the values where the drift correction of
\cite{hartmann_drift_2014} was applied. Heat maps of these
four samples are shown in the left hand side of Figure \ref{fig:SMS} (no correction) and Figure \ref{fig:SMS_Corr} (corrected), respectively. 

\begin{figure}
  \centering
  \makebox[\textwidth][c]{
  \begin{minipage}{0.6\textwidth}
  	  \begin{tabular}{c}
  		\hspace{1mm}\subfigure{\includegraphics[width=\textwidth]{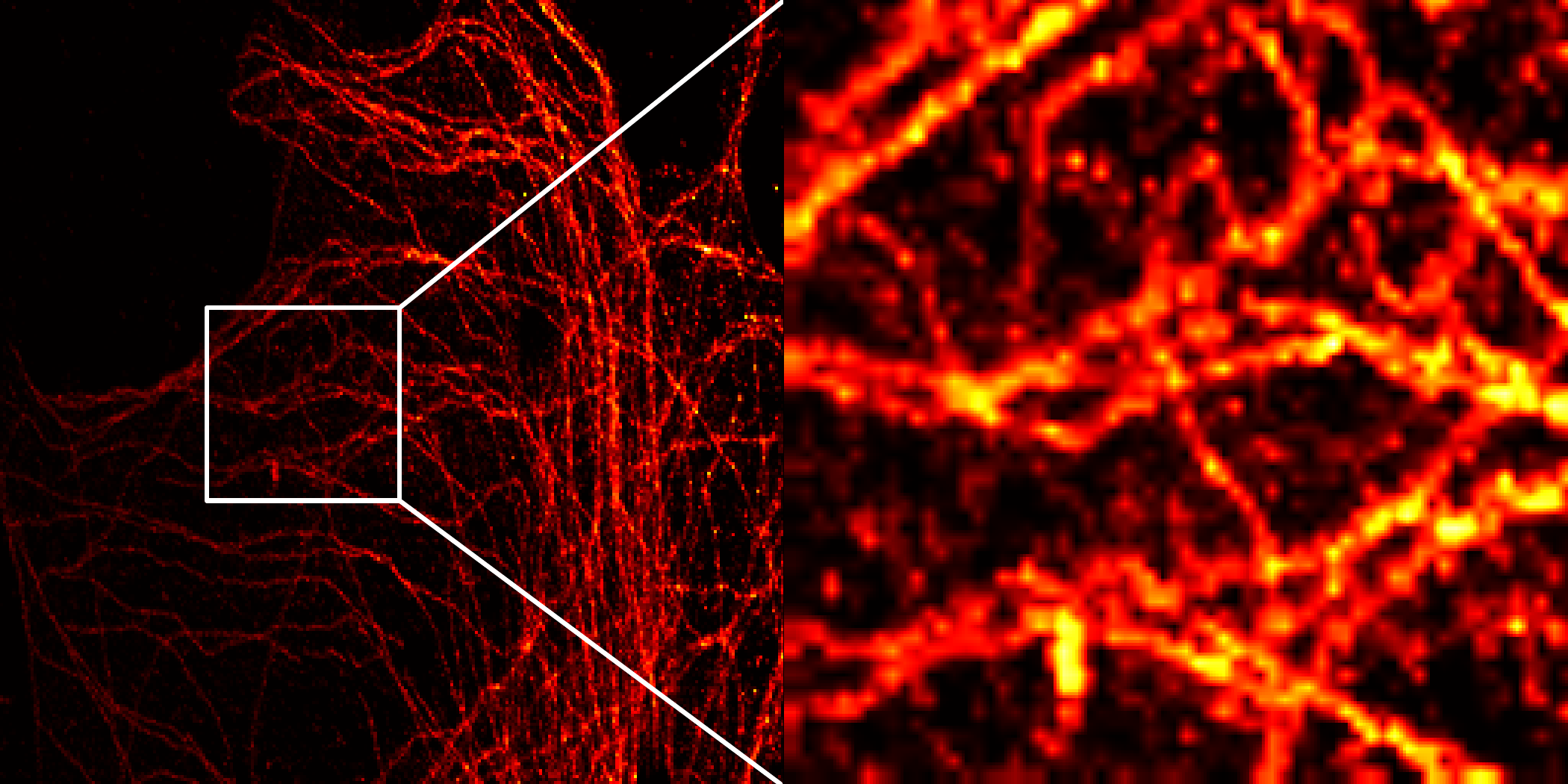}} \vspace{-1em}\\
  		\subfigure{\includegraphics[width=\textwidth]{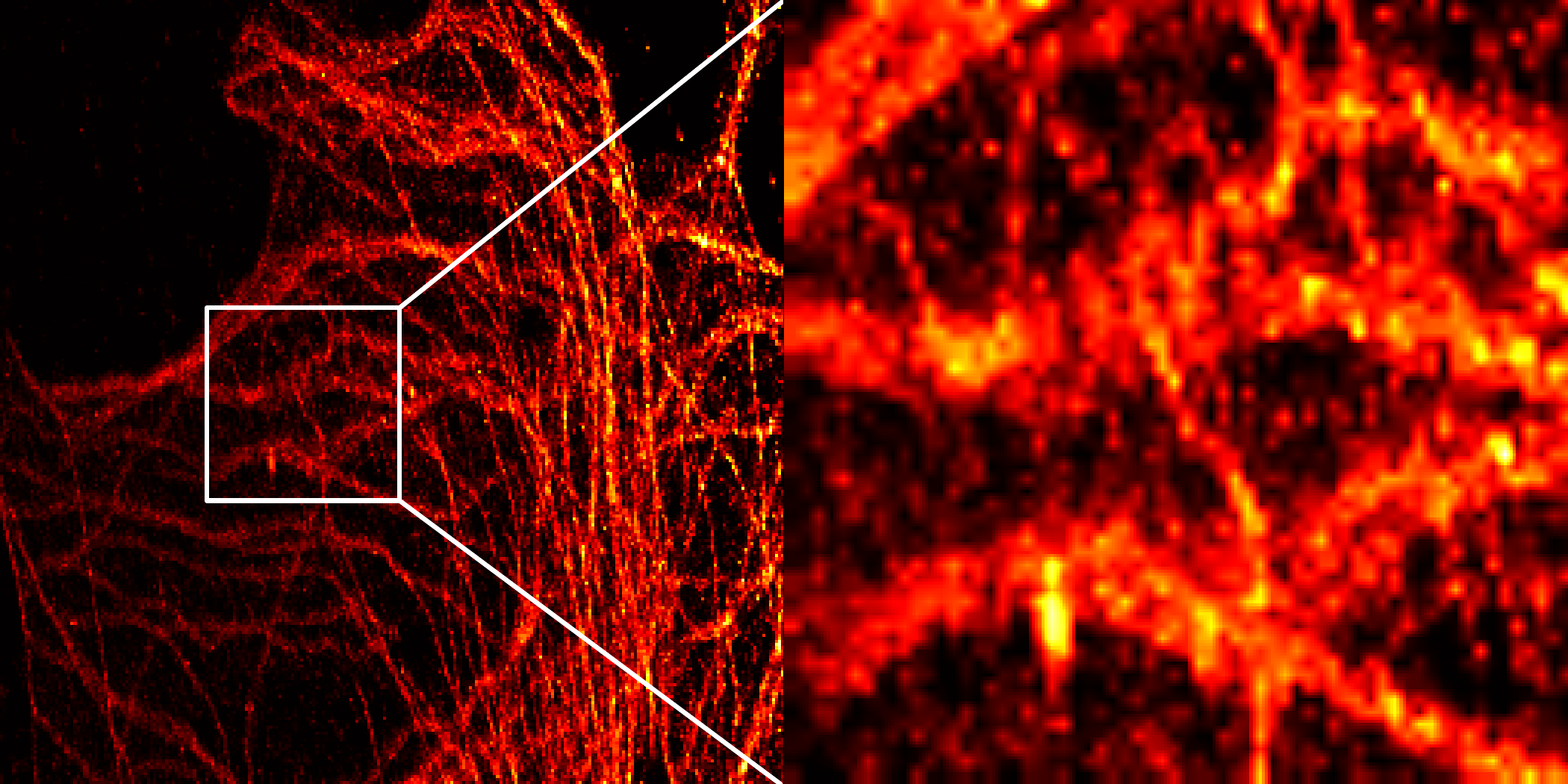}} 
  	\end{tabular}
  \end{minipage}
\hspace{1em}
  \begin{minipage}{0.4\textwidth}
	 \subfigure{\includegraphics[width=\textwidth]{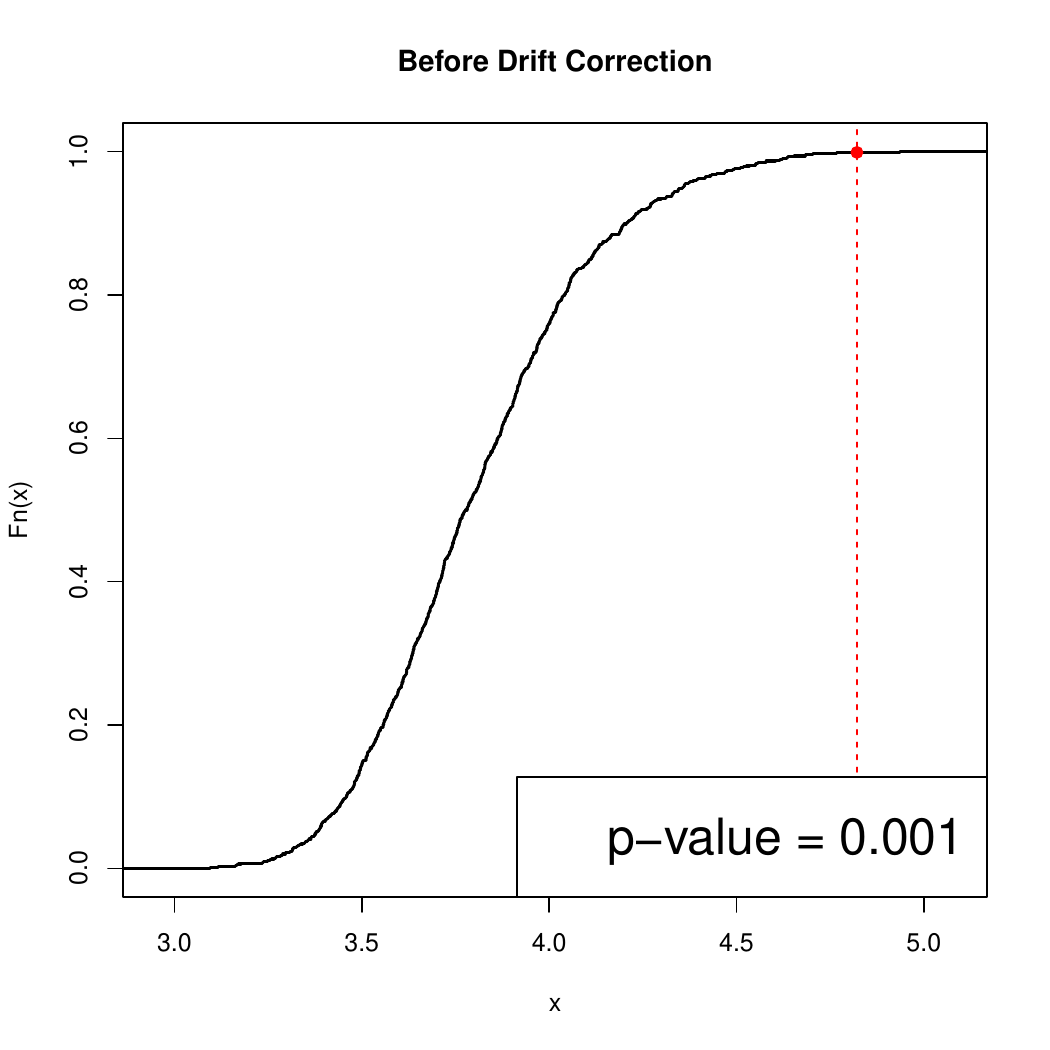}}
  \end{minipage}}

  \caption{Left: Aggregated samples of the first (first row) and the last (second row)
    $50\%$ of the observation time as heat maps of relative frequency without correction for the drift of the probe.
    Magnifications of a small area are shown to highlight the blurring of the
    picture. 
    Right: Empirical distribution function of a sample from the upper bound (tree approximation) of the limiting distribution. The red dot (line) indicates the scaled thresholded Wasserstein distance for $t = 6/256$.}
  \label{fig:SMS}
\end{figure}  

\begin{figure}
  \centering
\makebox[\textwidth][c]{
	\begin{minipage}{0.6\textwidth}
		\begin{tabular}{c}
			\hspace{1mm}\subfigure{\includegraphics[width=\textwidth]{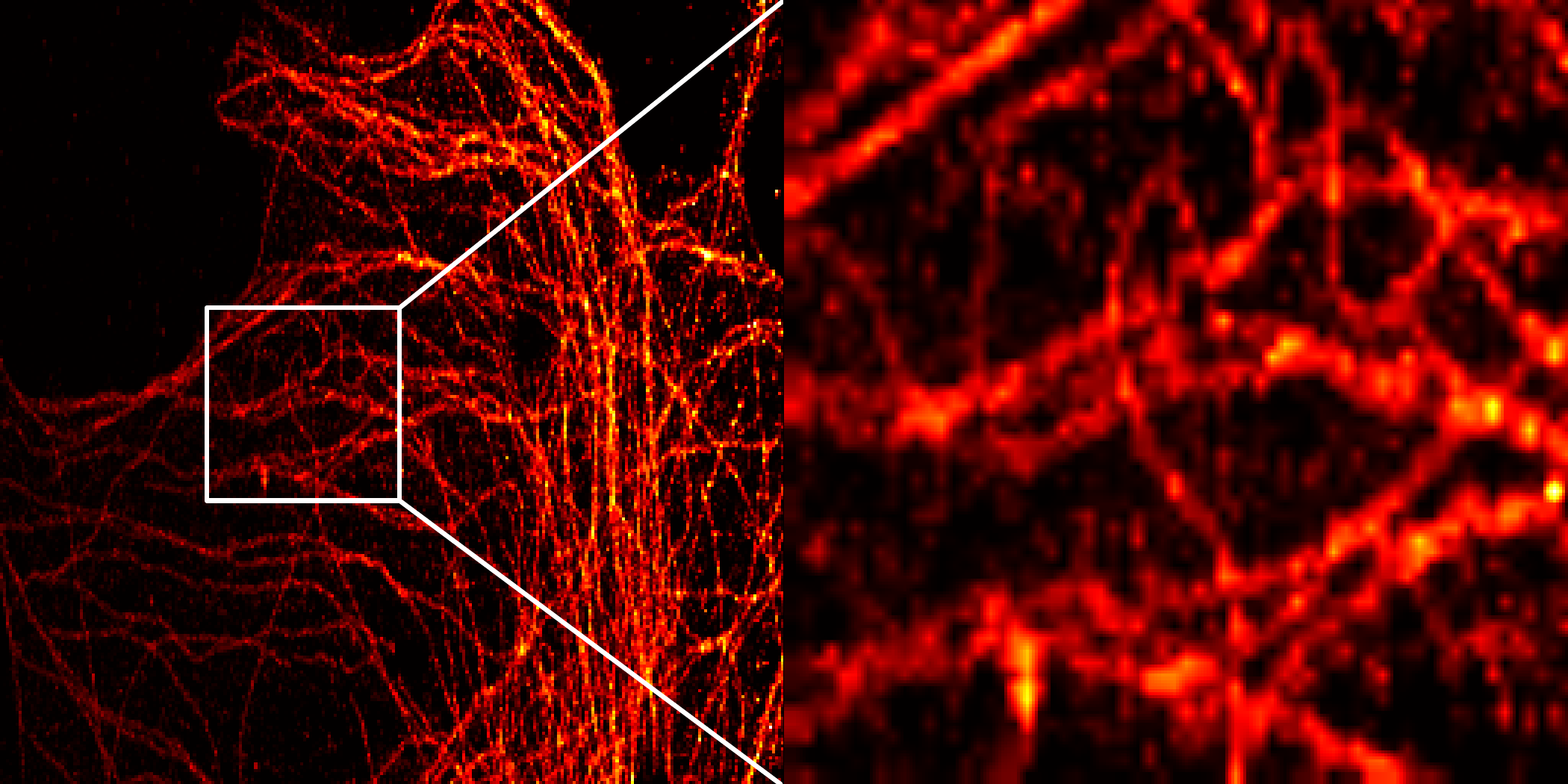}} \vspace{-1em}\\
			\subfigure{\includegraphics[width=\textwidth]{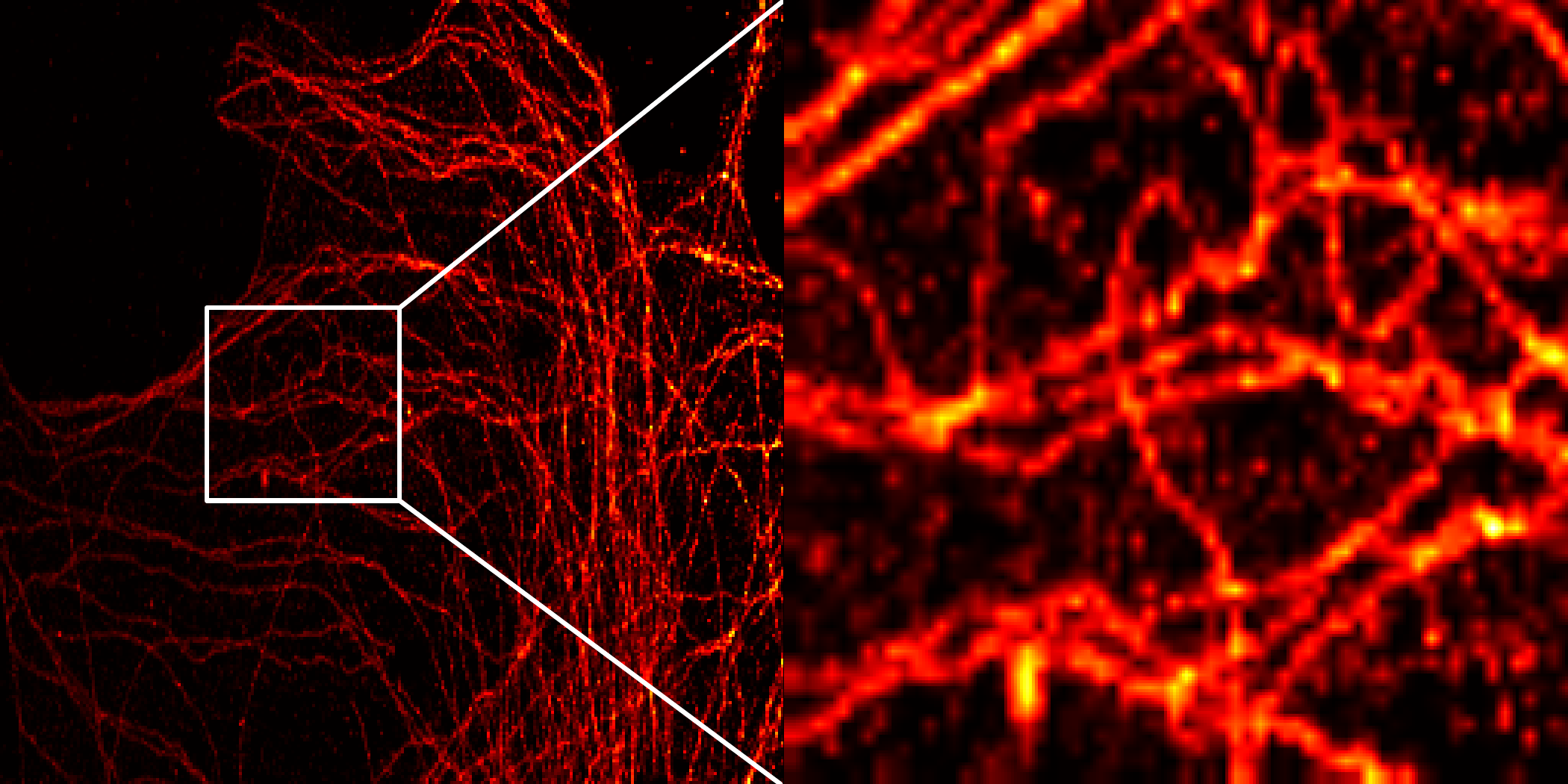}} 
		\end{tabular}
	\end{minipage}
	\hspace{1em}
	\begin{minipage}{0.4\textwidth}
		\subfigure{\includegraphics[width=\textwidth]{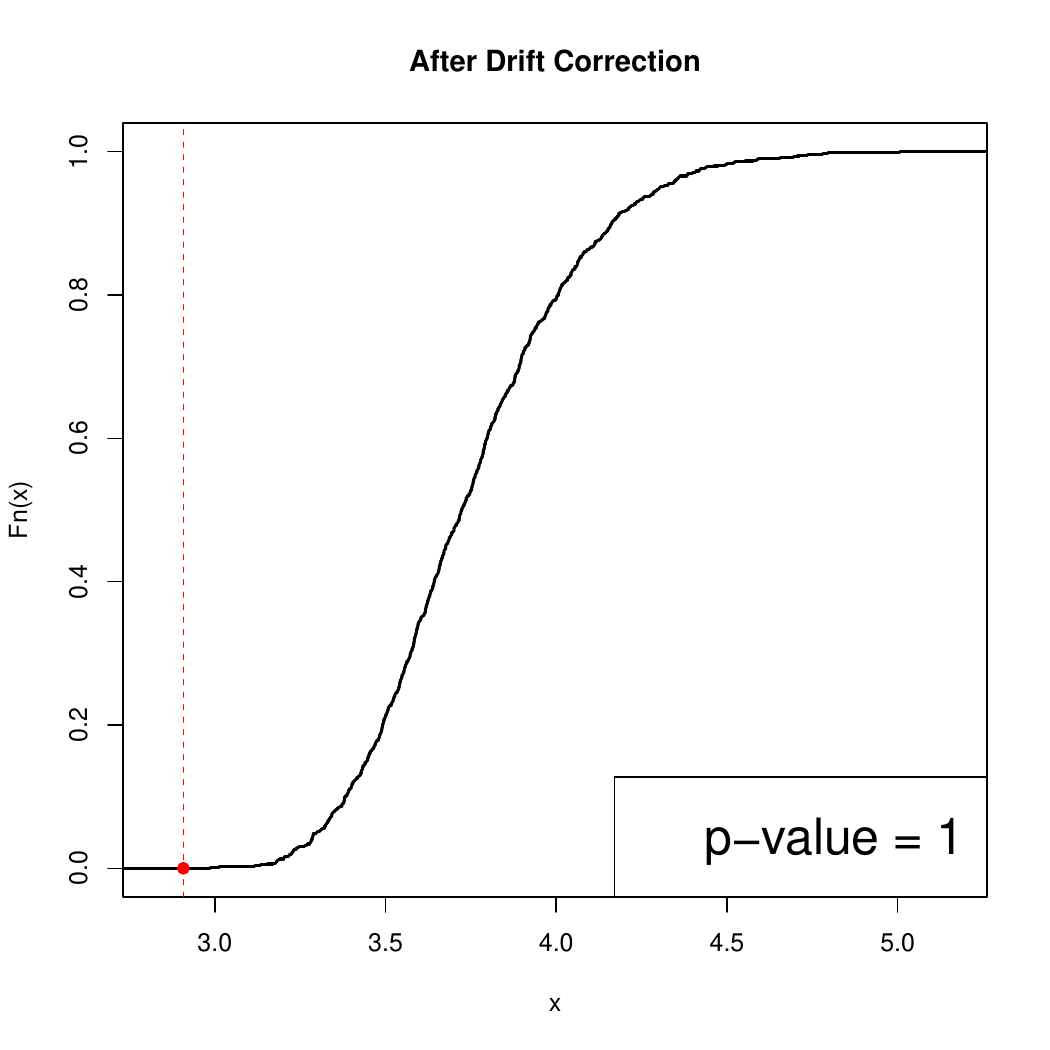}}
\end{minipage}}
		\caption{Left: Aggregated samples of the first (first row) and the last (second row)
			$50\%$ of the observation time as heat maps of relative frequency with correction for the drift of the probe.
			Magnifications of a small area are shown to highlight the drift correction of the
			picture. 
			Right: Empirical distribution function of a sample from the upper bound (tree approximation) of the limiting distribution. The red dot (line) indicates the scaled thresholded Wasserstein distance after drift correction for $t = 6/256$. The difference between the fist and the second $50\%$ is no longer significant.}
		\label{fig:SMS_Corr}
\end{figure}

The question we will address is: "To what extend has the drift been properly removed by the drift correction?"
In addition, from the application of the thresholded Wasserstein distance for different thresholds we expect to obtain detailed understanding for which scales the drift has been removed. As \cite{hartmann_drift_2014} have corrected with a global drift function one might expect that on small spatial scales not all effects have been removed.  

We compute the thresholded Wasserstein distance $\Wt_1$ between the
two pairs of
samples as described in Section \ref{sub:convex} with different thresholds $t \in \{2,3,\ldots, 14 \}/256$. We
compare these values with a sample from the stochastic upper bound for the limiting
distribution on regular grids obtained as described in Section \ref{sub:regular_grids_asymp}.
This allows us to obtain a test for the null hypothesis 'no difference' based on
Theorem \ref{thm:convex_strategy}. To visualize the outcomes of theses tests for different thresholds $t$ we have plotted the corresponding p-values in Figure \ref{fig:pvals}. The red line indicates the magnitude of the drift over the total recording time. As the magnitude is approximately $6/256$, we plot in the right hand side of Figure \ref{fig:SMS} and Figure \ref{fig:SMS_Corr} the empirical distribution functions of the upper bound \eqref{eq:convex_strategy} and indicate the value of the test-statistic for $t = 6/256$ with a red dot without the drift correction and with the correction, respectively. 

\begin{figure}
	\centering
	\includegraphics[width = 0.5\textwidth]{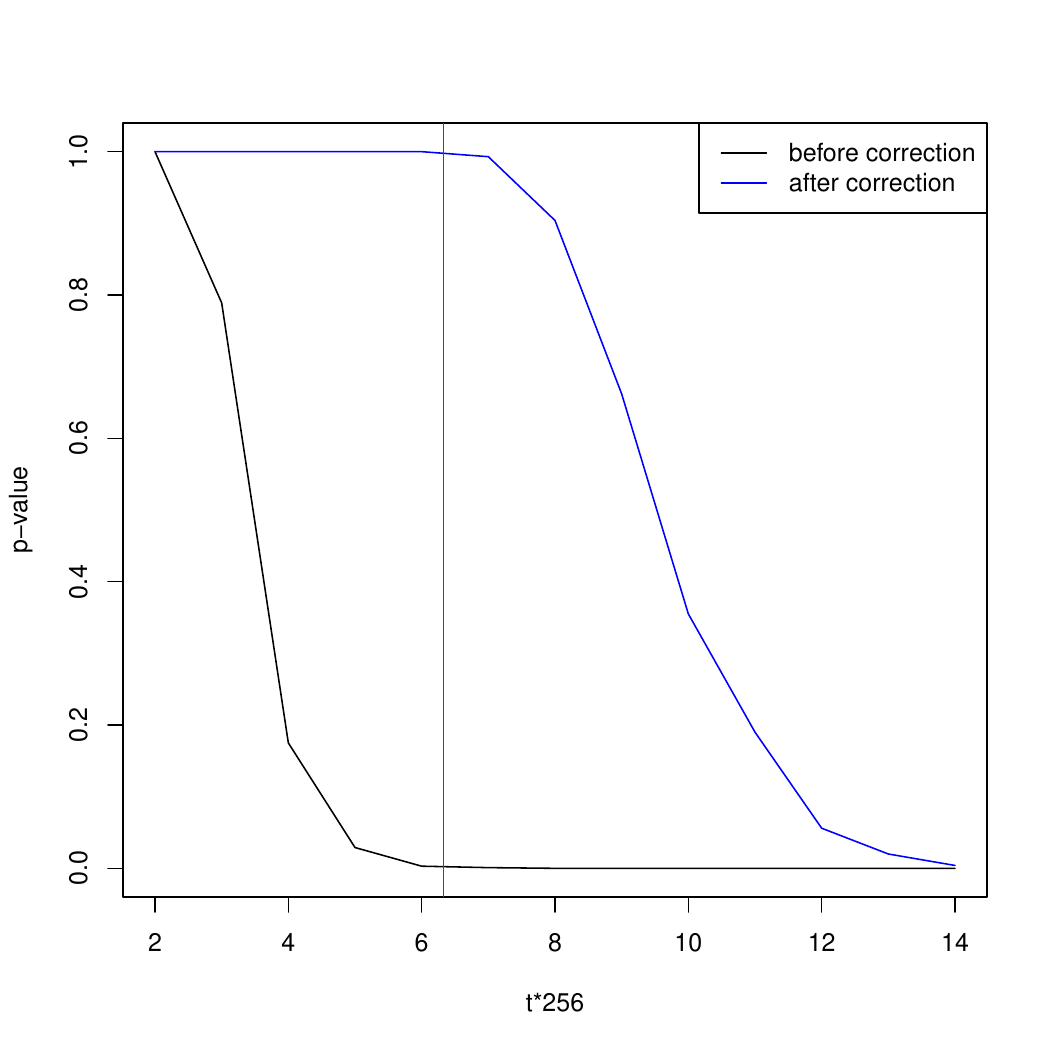}
	\caption{P-values for the null hypothesis 'no difference' for different thresholds $t$ before and after the drift correction. The red line indicates the magnitude of the total drift.}
	\label{fig:pvals}
\end{figure}

As shown in Figure \ref{fig:pvals} the differences caused by the drift of the
probe are recognized as highly statistically significant ($p \leq 0.05$) for thresholds larger than $t = 4/256$. After the drift correction
method is applied, the difference is no
longer significant for thresholds smaller than $t = 14/256$. The estimated shift during the first and the second 50\% of the observations is three pixels in x-direction and one pixel in y-direction. That shows that the significant difference that is detected when comparing the images without drift correction for $t \in \{5,6,7,8,9,10\}/ 256$ is caused in fact by the drift. The fact that there is still a significant difference for large thresholds ($t \geq 14$) in the corrected pictures suggests further intrinsic and local inhomogeneous motion of the specimen or non-polynomial drift that is not captured by the drift model used in \cite{hartmann_drift_2014} and bleaching effects of fluorescent markers.  

In summary, this example demonstrates that our strategy of combining a lower
bound for the Wasserstein distance with a stochastic bound of the limiting
distribution is capable of detecting subtle differences in a large $N$ setting.

\section*{Acknowledgments}
The authors gratefully acknowledge support by the DFG Research Training Group 2088 Project A1 and CRC 755 Project A6. They would like to thank M. Klatt for careful reading of the manuscript.  A. Munk is grateful to helpful comments of J. Wellner.

\bibliographystyle{apalike}
\bibliography{discretewasser,finitewasser}

\appendix
\section{Proofs}
\subsection{Hadamard directional differentiability}
\label{sec:hadamard}
In this section we follow mainly \cite{shapiro_asymptotic_1991} and \cite{romisch_delta_2004}. Let $\calU$ and $\calY$ be normed spaces. 
\begin{definition}[cf. \cite{shapiro_asymptotic_1991}, \cite{romisch_delta_2004}] \quad \\
	\begin{itemize}
		\item[a)] \textbf{Hadamard directional differentiability} \\
		A mapping $f\colon D_f \subset \calU \to \calY$ is said to be Hadamard directionally differentiable at $u \in \calU$ if for any sequence $h_n$ that converges to $h$ and any sequence $t_n \searrow 0$ such that $u +t_nh_n \in D_f$ for all $n$ the limit
		\begin{equation}
		f_u'(h) = \lim_{n \to \infty} \frac{f(u + t_nh_n) - f(u)}{t_n}
		\label{eq:directHadamard}
		\end{equation}
		exist.
		\item[b)] \textbf{Hadamard directional differentiability tangentially to a set}
		Let $K$ be a subset of $\calU$, $f$ is directionally differentiable tangentially to $K$ in the sense of Hadamard at $u$ if the limit \eqref{eq:directHadamard} exists for all sequences $h_n$ that converge to $h$ of the form $h_n = t_n^{-1}(k_n - u)$ where $k_n \in K$ and $t_n \searrow 0$.
		This derivative is defined on the contingent (Bouligand) cone to $K$ at $u$
		\[T_K(u) = \set{h \in \calU: h = \lim_{n\to\infty} t_n^{-1}(k_n - u), k_n \in K, t_n \searrow0}. \]
	\end{itemize}
	
\end{definition}
Note that this derivative is not required to be linear in $h$, but it is still positively homogeneous. Moreover, the directional Hadamard derivative $f_u'(\cdot)$ is continuous if $u$ is an interior point of $D_f$ \citep{romisch_delta_2004}.\\
 
The delta method for mappings that are directionally Hadamard differentiable tangentially to a set reads as follows:
\begin{theorem}[\cite{romisch_delta_2004}, Theorem 1]
	\label{thm:deltamethod}
	Let $K$ be a subset of $\calU$, $f\colon K \to \calY$ a mapping and assume that the following two conditions are satisfied: 
	\begin{enumerate}[i)]
		\item The mapping $f$ is Hadamard directionally differentiable at $u \in K$ tangentially to $K$ with derivative $f_u'(\cdot)\colon T_{K}(u) \to \calY$.
		\item For each $n$, $X_n\colon \Omega_n \to K$ are maps such that $a_n(X_n - u) \weak X$ for some sequence $a_n \to +\infty$ and some random element $X$ that takes values in $T_K(u)$. 
	\end{enumerate}
	Then we have $a_n(f(X_n) - f(u)) \xrightarrow{\mathscr{D}} f_u'(X).$ 
\end{theorem}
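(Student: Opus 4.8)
The plan is to deduce Theorem~\ref{thm:deltamethod} from the extended continuous mapping theorem, exactly as the classical functional delta method is obtained, the only new feature being that everything is carried out tangentially to $K$. First I would set $t_n \defeq a_n^{-1} \searrow 0$ and introduce the rescaled difference‑quotient maps
\[
g_n(h) \defeq \frac{f(u + t_n h) - f(u)}{t_n},
\]
defined for those $h \in \calU$ with $u + t_n h \in K$. Since $u + t_n\bigl(a_n(X_n - u)\bigr) = X_n \in K$, the map $g_n$ is always defined at $Z_n \defeq a_n(X_n - u)$, and one has the identity
\[
a_n\bigl(f(X_n) - f(u)\bigr) = g_n(Z_n), \qquad Z_n \weak X,
\]
with $X$ taking values in the contingent cone $T_K(u)$ by assumption ii).

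The decisive step is a continuous‑convergence statement for the $g_n$: whenever $h_n \to h$ in $\calU$ with $h \in T_K(u)$ and $u + t_n h_n \in K$ for all $n$, then $g_n(h_n) \to f_u'(h)$. This, however, is precisely the definition of Hadamard directional differentiability of $f$ at $u$ tangentially to $K$ — setting $k_n \defeq u + t_n h_n \in K$ gives $h_n = t_n^{-1}(k_n - u) \to h \in T_K(u)$, so the limit in \eqref{eq:directHadamard} exists and equals $f_u'(h)$. Two points are worth noting here: one never evaluates $f$ outside $K$, because all the sequences that occur are of the form $u + t_n h_n$ and these lie in $K$ by construction; and $f_u'$ is continuous on $T_K(u)$ (a standard by‑product of the Hadamard, rather than Gâteaux, notion), so that $f_u'(X)$ is a bona fide random element. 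In particular no separate continuity hypothesis on $f_u'$ needs to be imposed; it is built into the Hadamard property, which is exactly what makes the composition with weak convergence work.

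It then remains to combine $Z_n \weak X$ with the continuous convergence $g_n \to f_u'$. I would do this either by the extended continuous mapping theorem (van der Vaart and Wellner, Theorem~1.11.1) applied to the maps $g_n$ and the separable‑valued limit $X$, or — more hands‑on — by invoking the almost sure representation theorem: take $\tilde Z_n \overset{d}{=} Z_n$ and $\tilde Z \overset{d}{=} X$ on a common probability space with $\tilde Z_n \to \tilde Z$ almost surely, put $\tilde X_n \defeq u + t_n \tilde Z_n \overset{d}{=} X_n$, and apply the previous step pathwise on the almost sure event $\{\tilde Z_n \to \tilde Z \in T_K(u)\}$ to conclude $a_n(f(\tilde X_n) - f(u)) \to f_u'(\tilde Z)$ a.s., hence in distribution; transferring back along $\overset{d}{=}$ yields $a_n(f(X_n) - f(u)) \weak f_u'(X)$. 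The content is conceptually routine, and what I expect to be the main obstacle in writing it out rigorously is measure‑theoretic rather than analytic: $f$, and therefore $g_n$, is not assumed measurable, so one must use the outer‑probability versions of the continuous mapping and almost sure representation theorems, as is standard in empirical process theory, and one must keep the evaluation points inside $K$ at every stage — which is precisely why the tangential version of Hadamard differentiability is indispensable, $K$ being neither open nor of nonempty interior in general. In the applications of this paper ($\calY = \R$, $X$ Gaussian, $T_K(u)$ a closed subspace, $f_u'$ continuous) all of these measurability and separability points are automatic.
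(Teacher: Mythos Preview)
The paper does not give its own proof of this theorem; it is quoted verbatim from R\"omisch (2004, Theorem~1) and used as a black box in the proofs of Theorems~\ref{thm:distrlimit_one} and~\ref{thm:distrlimit_two}. Your argument via the difference quotients $g_n(h) = t_n^{-1}\bigl(f(u+t_nh)-f(u)\bigr)$ and the extended continuous mapping theorem (or, equivalently, an almost sure representation) is correct and is precisely the route taken in R\"omisch's original proof, so there is nothing to contrast. One small caveat: the continuity of $f_u'$ on $T_K(u)$ that you invoke is indeed a consequence of the Hadamard (as opposed to G\^ateaux) definition, but in the tangential setting its verification is slightly more delicate than in the interior case the paper mentions after Definition~A.1; for the extended continuous mapping theorem itself you do not actually need it, since the hypothesis there is exactly the continuous convergence $g_n(h_n)\to f_u'(h)$ along sequences $h_n\in D_n$, which is the content of tangential Hadamard differentiability.
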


\paragraph[Hadamard directional differentiability of the Wasserstein distance on countable metric spaces]{Hadamard directional differentiability of the  Wasserstein distance on countable metric spaces}

For $\br,\bs \in \calP_p(\calX)$ the $p$-th power of the $p$-th Wasserstein distance is the optimal value of an infinite dimensional linear program. We use this fact to verify that the $p$-th power of the Wasserstein distance \eqref{eq:wasser} on the countable metric spaces $\calX$ is directionally Hadamard differentiable with methods of sensitivity analysis of optimal values.\\

The $p$-th power of the Wasserstein distance on countable metric spaces is the optimal value of the following infinite dimensional linear program
\begin{equation}
\label{eq:linearprog}
\begin{aligned}
\min_{\bw \in \ell^1_{d^p}(\calX\times \calX)} &\sum_{x,x' \in \calX} d^p(x,x')w_{x,x'}\\
\text{subject to } & \sum_{x'\in\calX} w_{x,x'} = r_x \quad \forall x \in \calX, \\  &\sum_{x\in\calX} w_{x,x'} = s_{x'}, \quad \forall x'\in \calX,\\
&w_{x,x'} \geq 0, \quad \forall x,x'\in \calX.
\end{aligned}
\end{equation}
\begin{theorem}
	\label{thm:Hadamard}
	$W_p^p$ as a map from $(\calP_p(\calX) \times \calP_p(\calX), \normfp{\cdot})$ to $\R$, $(\br,\bs) \mapsto W_p^p(\br,\bs)$ is Hadamard directionally differentiable tangentially to $\calP_p(\calX) \times \calP_p(\calX)$. The contingent cone on which the derivative is defined is given by  
	\[\calD(\br,\bs) = \calD(\br) \times \calD(\bs)\] with 
	\[\calD(\br) \defeq \set{\bd \in \ell^1_{d^p}(\calX)\backslash\{0\} \colon \sum_{x \in \calX} {d}_x = 0, ~ d_x\in [-r_x,1-r_x]} \] and the directional derivative is as follows
	\begin{equation}
	\label{eq:derivative}
	(\bd_1,\bd_2) \mapsto \sup_{(\blambda,\bmu) \in \calS^*(\br,\bs)} -(\scalprod{\blambda,\bd_1} + \scalprod{\bmu,\bd_2}),
	\end{equation}
	where $\calS^*(\br,\bs)$ is set of optimal solutions of the dual problem which is defined in \eqref{eq:dual_set_rs}. 
	
\end{theorem}

\begin{proof}
	
	We start the proof with stating the considered functions and the spaces on which they are defined. The \textit{objective function} of the linear program that determines the $p$-th power of the $p$-th Wasserstein distance is given as $f\colon \ell^1_{d^p}(\calX \times \calX) \to \R, \bw \mapsto \sum_{x,x' \in \calX} d^p(x,x')w_{x,x'}$. The constraints are encoded by the \textit{constraint function} $C\colon \ell^1_{d^p}(\calX \times \calX) \times \ell^1_{d^p}(\calX) \times \ell^1_{d^p}(\calX) \to \ell^1_{d^p}(\calX \times \calX) \times \ell^1_{d^p}(\calX) \times \ell^1_{d^p}(\calX)$ with
	\begin{equation}
	C(\bw,(\br,\bs)) = \begin{pmatrix}
	\bw\\
	\Sigma_1 \bw - \br \\
	\Sigma_2 \bw - \bs\\
	\end{pmatrix},
	\label{eq:constraint_fun}
	\end{equation}
	here $\Sigma_1,\Sigma_2\colon \ell^1_{d^p}(\calX \times \calX) \to \ell^1_{d^p}(\calX)$ are the summation operators over the first and the second component, i.e., $\Sigma_1\bw = \sum_{x' \in \calX} w_{x,x'}$ and $\Sigma_2\bw = \sum_{x \in \calX} w_{x,x'}$. Furthermore, we need the closed convex set $K= \ell^1_{d^p}(\calX \times \calX)_+ \times \set{\bnull}\times \set{\bnull}$ were $\ell^1_{d^p}(\calX \times \calX)_+$ are the elements in $\ell^1_{d^p}(\calX \times \calX)$ that have only non-negative entries. With these definitions the $p$-th power of the $p$-th Wasserstein distance is the optimal value of the following abstract parametrized optimization problem:
	\begin{equation}
	\min_{\bw \in \ell^1_{d^p}(\calX \times \calX)} f(\bw) \text{ s.t. } C(\bw,(\br,\bs)) \in K
	\end{equation}\\
	We will use Theorem 4.24 from \cite{bonnans_perturbation_2000}. To this end, we need to check the following three conditions. 

	\begin{itemize}
			
		\item[(i.)] \textit{Convexity and existence of optimal solution} \\
		Problem \eqref{eq:linearprog} is obviously convex as it is a linear program with linear constraints. Note that the definition of a convex problem (Def. 2.163) in \cite{bonnans_perturbation_2000} is slightly different from the usual definition of a convex program as they require convexity of the constraint function \eqref{eq:constraint_fun} \textit{with respect to $-K$}. This condition can be shown by easy calculations for our problem. \\
		The set of primal optimal solutions, $\calS(\br,\bs)$, is according to Thm. 4.1 in \cite{villani_optimal_2008} non empty.

		\item[(ii.)] \textit{Directional regularity}\\
		Set for some direction $(\bd_1,\bd_2)\in \calD(\br,\bs) \subset \ell^1_{d^p}(\calX)\times \ell^1_{d^p}(\calX)$ 
			\[\bar{C}(\bw,t) = (\bw,\bw^T\one - \br - t\bd_1, \bw\one - \bs - t\bd_2, t).\]
		The directional regularity condition is fulfilled at $\bw_0$ in a direction $(\bd_1,\bd_2)$ if Robinson's constraint qualification is satisfied at the point $(\bw_0,0)$ for the mapping $\bar{C}(\bw,t)$ with respect to the set $K \times \R_+$ \citep[Def. 4.8]{bonnans_perturbation_2000}.
		According to Thm. 4.9 in \cite{bonnans_perturbation_2000} the following condition is necessary and sufficient for the directional regularity constraint to hold:
		\[\boldsymbol{0} \in \inter{C(\bw_0,(\br,\bs)) + DC(\bw,(\br,\bs))(\ell^1_{d^p}(\calX \times \calX), \R_+(\bd_1,\bd_2)) - K} ,\]
		where $ \R_+(\bd_1,\bd_2) = \set{t(\bd_1,\bd_2), t \geq 0}$.
		We are going to show that the directional regularity condition in a direction $(\bd_1,\bd_2)\in \calD(\br,\bs)$ 
		holds for all primal optimal solutions $\bw_0 \in \calS(\br,\bs)$.\\
		For a primal optimal solution $\bw_0$ it is 
		\[C(\bw_0,(\br,\bs)) = (\bw_0,\bnull,\bnull).\]
		Since $C(\bw,(\br,\bs))$ is linear in $(\bw, (\br,\bs))$  and bounded with respect to the product norm on the space $\ell^1_{d^p}(\calX \times \calX) \times \ell^1_{d^p}(\calX) \times  \ell^1_{d^p}(\calX)$ it holds that\\
		$DC(\bw_0,(\br,\bs))(\ell^1_{d^p}(\calX \times \calX),\R_+(\bd_1,\bd_2)) = (\bw,\Sigma_1\bw - t\bd_1,\Sigma_2\bw- t\bd_2)$ for $t \geq 0$ and the directional regularity condition reads
		\[\bnull \in \inter{(\bw_0,\bnull,\bnull) + (\bw,\Sigma_1\bw -t\bd_1 , \Sigma_2\bw -t\bd_2) - K}.\]
		This set is just $\ell^1_{d^p}(\calX \times \calX) \times \ell^1_{d^p}(\calX) \times  \ell^1_{d^p}(\calX)$ as $\bw \in \ell^1_{d^p}(\calX \times \calX)$ and hence the directional regularity constraint is fulfilled.\\
		
		\item[(iii.)] \textit{Stability of primal optimal solution}\\
		We aim to verify that for perturbed measures of the form $\br_n = \br + t_n\bd_1 + o(t_n)$  and $\bs_n = \bs + t_n \bd_2 + o(t_n)$ with $t_n \searrow 0$, $\br, \bs \in \calP_p(\calX)$, $\bd_1 \in \calD(\br)$ and $\bd_2 \in \calD(s)$ there exist a sequence of primal optimal solutions $\bw_n$ that converges to the primal optimal solution $\bw_0$ of the unperturbed problem. For $n$ large enough $t_n \leq 1$, hence we can assume without loss of generality that $t_n \leq 1$ for all n. In this case $\br_n$ and $\bs_n$ are probability measure with existing $p$-th moment, i.e. elements of $\calP_p(\calX)$. This yields that Theorem 5.20 in \cite{villani_optimal_2008} is applicable. This theorem gives us the stability of the optimal solution as $\calP_p(\calX)$ is a closed subset of $\ell^1_{d^p}(\calX)$.
	\end{itemize}
So far, we checked all the assumptions of Theorem 4.24 in \cite{bonnans_perturbation_2000}. The rest of this section is devoted to the derivation of formula \eqref{eq:derivative} from the result of that theorem. \\
The Lagrangian $L$ of a parametrized optimization problem 
\[\min_{w} f(w,u) \text{ s.t. } C(w,u) \in K\]
is given by 
\[L(w,\lambda,u) = f(w,u) + \scalprod{\lambda,C(w,u)},\]
where $f$ is the objective function, $u$ the parameter and $C$ the constraint function and $\scalprod{\cdot,\cdot}$ the dual pairing (see for example Section 2.5.2 in \cite{bonnans_perturbation_2000}). We refer to $\lambda$ as Lagrange multiplier.  
For the transport problem this yields with $(\br,\bs)$ being the parameter and the definition of the constraint function in \eqref{eq:constraint_fun}
\begin{multline*}
L(\bw,(\bnu,\blambda, \bmu),(\br,\bs))  \\
=\sum_{x, x' \in \calX} d^p(x,x') w_{x,x'} + \scalprod{\bnu, \bw} + \scalprod{\blambda, \bw^T\one - \br} + \scalprod{\bmu, \bw\one - \bs}.\end{multline*}
Differentiating this in the Fr\'echet sense with respect to $(\br,\bs)$ and applying $(\bd_1,\bd_2)$ to this linear operator results in 
\[D_{(r,s)}L(\bw,(\bnu,\blambda,\bmu), (\br,\bs))(\bd_1,\bd_2) = -(\scalprod{\blambda, \bd_1} + \scalprod{\bmu, \bd_2})\] 
as the Lagrangian is linear and bounded in $(\br,\bs)$. As this derivative is independent of $\bw$ and the set of Lagrange multipliers $\Lambda(\br,\bs)$ equals the set of dual solutions $\calS^*(\br,\bs)$ in the case of a convex unperturbed problem (see section above Thm. 4.24 in \cite{bonnans_perturbation_2000}) it holds that the directional Hadamard derivative is given by
\begin{multline*}(\bd_1,\bd_2) \mapsto \inf_{\bw \in \calS(\br,\bs)} \sup_{(\blambda,\bmu) \in \Lambda(\br,\bs)} D_{(r,s)}L(\bw,(\bnu,\blambda,\bmu), (\br,\bs))(\bd_1,\bd_2) \\
= \inf_{\bw \in \calS(\br,\bs)} \sup_{(\blambda,\bmu) \in \Lambda(\br,\bs)} -(\scalprod{\blambda, \bd_1} + \scalprod{\bmu, \bd_2})  \\
=  \sup_{(\blambda,\bmu) \in \calS^*(\br,\bs)} -(\scalprod{\blambda, \bd_1} + \scalprod{\bmu, \bd_2}).\end{multline*}
\end{proof}

\subsection{The limit distribution under equality of measures}
\label{sec:limit_equality}

First, observe that for the case $\br = \bs$ the set of dual solutions $\calS^*(\br,\br)$ in \eqref{eq:dual_set_rs} reduces to:
\begin{align*}
 \calS^*(\br,\br)
&= \Big\{(\blambda,\bmu) \in \ell^{\infty}_{d^{-p}}(\calX)\times \ell^{\infty}_{d^{-p}}(\calX): \left\langle \br,\blambda\right\rangle +\left\langle \br,\bmu \right\rangle = 0,\\
&\hspace{35ex }\lambda_x + \mu_{x'} \leq d^p(x,x') \quad \forall x,x' \in \calX \Big\}\\
&=\Big\{(\blambda,\bmu) \in \ell^{\infty}_{d^{-p}}(\calX)\times \ell^{\infty}_{d^{-p}}(\calX):  \lambda_x = -\mu_x \text{ for } x \in \supp(\br), \\
&\hspace{35ex }\lambda_x + \mu_{x'} \leq d^p(x,x') \quad \forall x,x' \in \calX \Big\}.
\end{align*}

The equality follows as for $x = x'$ the inequality condition gives $\lambda_x + \mu_x \leq 0$ and all $r_x$ in the sum are non-negative. The conjunction of these two conditions yields $\lambda_x + \mu_x = 0$. \\
This set is a subset of the set given in \eqref{eq:dual_set}, but changing $\calS^*(r,r)$ to $\calS^*(r)$ does not change the optimal value of the linear programs in Theorem \ref{thm:distrlimit_one} and \ref{thm:distrlimit_two} as the Gaussian process $\bG$ is zero at all $x \notin \supp(\br)$.\\
In the case, that the support of $\br$, i.e., $\set{x \in \calX \colon r_x > 0}$, is the whole ground space $\calX$, the set $\calS^*(\br)$ is independent of $\br$ and it reduces to 
\begin{equation*}
\calS^* = \Big\{\blambda \in \ell^{\infty}_{d^{-p}}(\calX): \lambda_x - \lambda_{x'} \leq d^p(x,x') \quad \forall x,x' \in \calX \Big\}.
\end{equation*}
\begin{proof}[Proof of Thm. \ref{thm:distrlimit_two} a)] For the two sample case the delta method together with the continuous mapping theorem and equation \eqref{eq:two_sample_null} gives
\[\rho_{n,m}^{1/p} W_p(\brh_n, \bsh_m) \weak \left\lbrace \max_{(\blambda, \bmu) \in\calS^*(\br, \br)}  \sqrt{\alpha}\scalprod{\blambda,\bG} + \sqrt{1-\alpha} \scalprod{\bmu, \bG'}\right\rbrace^{1/p}.  \]
Nevertheless, for all $x \in \calX$ where $r_x > 0$ it holds $\lambda_x = - \mu_x$ and for all $x \in \calX$ where $r_x = 0$ the limit element $G_x$ is degenerate. Hence, the limit distribution above is equivalent in distribution to 
\[\left\lbrace \max_{\blambda \in\calS^*(\br, \br)}  \sqrt{\alpha}\scalprod{\blambda,\bG} - \sqrt{1-\alpha} \scalprod{\blambda, \bG'}\right\rbrace^{1/p}.  \]
The independence of $\bG$ and $\bG'$ yield that $\sqrt{\alpha} \scalprod{\blambda, \bG} - \sqrt{1-\alpha} \scalprod{\blambda, \bG'}$ equals $\sqrt{\alpha + (1- \alpha)} \scalprod{\blambda, \bG}$ in distribution and hence the limit reduces to
\[\left\lbrace \max_{\blambda \in\calS^*(\br)}  \scalprod{\blambda,\bG}\right\rbrace^{1/p}.\]
\end{proof}
\begin{proof}[Proof of decomposition in Rem. \ref{rem:theorem1} e)]
For the alternative representation of the distributional limit we decompose the Gaussian process $\bG$ with mean zero and covariance structure as defined in \eqref{eq:def_sigma} into $\bG = \bG^+ - \bG^-$ with $\bG^+$, $\bG^-$ non-negative, then the limiting distribution in \eqref{eq:one_null} can be rewritten as follows:
\[ \begin{aligned}
\max_{\blambda \in\calS^*(\br)} \scalprod{\bG,\blambda} &= \max_{\blambda \in\calS^*(\br,\br)} \scalprod{\bG^+,\blambda} - \scalprod{\bG^-,\blambda} \\
&= \max_{(\blambda, \bmu) \in \ell^\infty_{d^{-p}}(\calX) \times \ell^\infty_{d^{-p}}(\calX)} \scalprod{\bG^+,\blambda} + \scalprod{\bG^-,\bmu} \\
&~\text{s.t.} \quad \lambda_x + \mu_x = 0  \quad\text{for all}\quad x \in \supp(\br)\\
&\lambda_x + \mu_{x'} \leq d^p(x,x') \quad \forall x,x' \in \calX.
\end{aligned}
\]
The Lagrangian for this problem is given by 
\begin{multline*}
L(\blambda, \bmu, \bw, \bz) =  \sum_{x \in \calX}{G^+_x\lambda_x} + \sum_{x' \in \calX}{G^-_{x'}\mu_{x'}} \\
+ \sum_{x \in \calX}{z_x (\lambda_x + \mu_x) \one_{\{r_x > 0\}}} + \sum_{x,x' \in \calX}{w_{x,x'}(\lambda_x + \mu_{x'} - d^p(x,x'))}.
\end{multline*}
From this we can derive the dual via
\[\min_{\bw \geq 0 \in \ell^1_{d^p}(\calX \times \calX), \bz \in \ell^1_{d^p}(\calX)} \sup_{\blambda, \bmu \in \ell^\infty_{d^{-p}}(\calX)}  L(\blambda, \bmu, \bw, \bz),\]
where $\bw \geq 0$ to be understood componentwise. It yields
\begin{align*}
\inf_{\bw \geq 0, \bz} &\sum_{x,x' \in \calX} d^p(x,x') w_{x,x'} \\
\text{s.t.} \quad &\sum_{x' \in \calX} w_{x,x'} = G^+_x + z_x\one_{\{r_x > 0\}} \\
&\sum_{x \in \calX} w_{x,x'} = G^-_{x'} +z_{x'}\one_{\{r_x > 0\}},
\end{align*}
where the minimum over $\bw$ equals the $p$-th power of the $p$-th Wasserstein distance. More precisely the linear program above is equivalent to 
\[\inf_{\bz(\br) \in \ell^1_{d^p}(\calX)} W_p^p\left(\bG^+ + \bz(\br), \bG^- + \bz(\br) \right), \]
where $\bz(\br)$ depends on $\br$ through the support of $\br$ in the following sense: $z_x = 0$ for $x \in \calX$ such that $r_x = 0$.
\end{proof}

\subsection{Proof of Theorem \ref{THM:TREES}}
\label{sec:proof_tree}
\paragraph{Simplify the set of dual solutions $\calS^*$} As a first step, we
rewrite the set of dual solutions $\calS^*$ given in definition 
\eqref{eq:dual_full} in our tree notation as
\begin{equation}
  \calS^* = \left\{ \blambda\in\ell^\infty_{d^{-p}}(\calX): \lambda_x - \lambda_{x'} \leq
d_\calT(x,x')^p, \quad x,x'\in \calX \right\}.
\label{eq:S*_tree}
  \end{equation}
  The key observation is that in the condition $\lambda_x - \lambda_{x'}\leq d_\calT(x,x')^p$ we
  do not need to consider all pairs of vertices $x,x'\in \calX$, but only those which
  are joined by an edge. To see this, assume that only the latter condition holds.
  Let $x,x'\in \calX$ arbitrary and $x = x_1,
  \dots , x_n = x'$ the sequence of vertices defining the unique path joining
  $x$ and $x'$, such that $(x_j,x_{j+1})\in E$ for $j=1,\dots,n-1$. That this path contains only a finite number of edges, was proven in Section \ref{sec:tree}. Then
  \[
    \lambda_x - \lambda_{x'} = \sum_{j=1}^{n-1} (\lambda_{x_j} - \lambda_{x_{j+1}}) \leq \sum_{j=1}^{n-1}
    d_\calT(x_j, x_{j+1})^p \leq d_\calT(x,x')^p,
  \]
  such that \eqref{eq:S*_tree} is satisfied for all $x,x'\in \calX$. Noting that if two
  vertices are joined by an edge then one has to be the parent of the other, we
  can write the set of dual solutions as
  \begin{equation}
    \calS^* = \left\{ \blambda\in \ell^\infty_{d^{-p}}(\calX)  : |\lambda_x -
      \lambda_{\parent(x)}| \leq d_\calT(x,\parent(x))^p ,\quad x\in \calX  \right\}.
      \label{eq:S*_trees_simple}
    \end{equation}

    \paragraph{Rewrite the target function} 
    To rewrite the target function we need to make several definitions.
    Let \[
    \tilde{e}^{(x)}_y = \begin{cases}
    \frac{1}{d^p(x,x_0)} & \text{ if } y = x,\\
    -\frac{1}{d^p(x,x_0)} & \text{ if } y = \parent(x),\\
    0 & \text{ else.}
    \end{cases}\]
    Furthermore, we define for $\bmu \in \ell^1_{d^p}(\calX)$
    \[\eta_x = \sum_{x' \in \children(x)}d^p(x,x_0) \mu_{x'}\]
    and 
    \[ \begin{aligned}
    \bmu_n &= \sum_{x \in A_{\leq n}\setminus \rootT(\calT) } \eta_x \tilde{ \be}^{(x)} = \bmu \one_{A_{<n} } + \sum_{x \in A_{=n}} \frac{1}{d^p(x,x_0)} \eta_x \be{(x)},
    \end{aligned}\] 
    here $A_{\leq n} = \set{x \in \calX\colon \text{level of }x \leq n, x \text{ is within the first } n \text{ vertices of its level}}$, \\
    $A_{= n} = \set{x \in \calX\colon \text{level of }x = n, x \text{ is within the first } n \text{ vertices of its level}}$, \\
    $A_{> n} = \set{x \in \calX\colon \text{level of }x > n  \text{ or } x \text{ is not within the first } n \text{ vertices of its level}}$ and 
    $\be{(x)}$ the sequence 1 at $x$ and 0 everywhere else.
    For this sequence $\bmu_n$ it holds
    \[\begin{aligned}
    \normfp{\bmu - \bmu_n} &= \sum_{x \in X} d^p(x,x_0) \abs{\bmu \one_{A_{>n}} - \sum_{\tilde{x} \in A_{=n}} \frac{1}{d^p(\tilde{x}, x_0)}\eta_{\tilde{x}} \be^{(\tilde{x})}}_x \\
    & \leq \normfp{\bmu \one_{A_{>n}}} + \abs{\sum_{x \in A_{=n}} \eta_x}.
    \end{aligned}\]
   As $n \to \infty$, the first part tends to zero as $\bmu \in \ell^1_{d^p}(\calX)$, and 
   \[\abs{\sum_{x \in A_{=n}} \eta_x} \leq \sum_{x \in A_{=n}} \sum_{x' \in \children(x)} \abs{\mu_{x'}} d^p(x',x_0) \leq \sum_{x \in A_{\geq n}} \abs{\mu_x} d^p(x,x_0) \xrightarrow{n \to \infty} 0. \]
   Hence, our target function for $\bmu \in \ell^1_{d^p(\calX)}$ and $\blambda \in \ell^{\infty}_{d^{-p}}(\calX)$ can be rewritten in the following way 
   \begin{equation}\begin{aligned}
   \scalprod{\bmu, \blambda} &= \lim_{n \to \infty} \scalprod{\bmu_n, \blambda} \\
   & = \lim_{n \to \infty} \sum_{x \in A_{\leq n}} \eta_x \scalprod{\tilde{\be}^{(x)}, \blambda} \\
   & = \lim_{n \to \infty} \sum_{x \in A_{\leq n}}  \sum_{x' \in \children(x)} \mu_{x'}(\lambda_x - \lambda_{\parent(x)}) \\
   & \leq \lim_{n \to \infty} \sum_{x \in A_{\leq n}}  \abs{\sum_{x' \in \children(x)} \mu_{x'}}\abs{\lambda_x - \lambda_{\parent(x)}}\\
   & = \lim_{n \to \infty} \sum_{x \in A_{\leq n}}  \abs{(S_\calT \bmu)_x}\abs{\lambda_x - \lambda_{\parent(x)}}
    \end{aligned}
    \label{eq:retarget}
    \end{equation}
    Observe that for $\blambda \in \calS^*$ it holds 
    \begin{equation}\abs{\lambda_x - \lambda_{\parent(x)}} \leq d^P(x,\parent(x)). 
    \label{eq:ineq_dual}
    \end{equation}
    By condition \eqref{eq:entropy} $\bm{G}\sim\mathcal{N}(0,\Sigma(\br))$ is an element of $\ell^1_{d^p}(\calX)$. For $\blambda\in\calS^*$ we get with \eqref{eq:retarget} and \eqref{eq:ineq_dual} that 
    \begin{equation}
      \scalprod{\bm{G},\blambda} \leq \lim_{n \to \infty} \sum_{x \in A_{\leq n}}  \abs{(S_\calT \bG)_x}d_\calT(x,\parent(x))^p.
      \label{eq:tree_bound_on_max}
    \end{equation}
    Therefore, $\max_{\blambda\in\calS^*} \scalprod{\bm{G},\blambda}$ is bounded by
    $\lim_{n \to \infty} \sum_{x \in A_{\leq n}}  |(S_\calT \bm{G})_x|
    d_\calT(x,\parent(x))^p$. We can define the sequence
    $\bnu\in\ell^\infty_{d^{-p}}(\calX)$ by 
    \begin{equation}
   \begin{aligned}
   	\nu_{\rootT}& = 0 \\
   	\nu_{x} - \nu_{\parent(x)}& = \mathrm{sign}((S_{\calT}\bG)_x) d_{\calT}(x,\parent(x))^p \\ 
   \end{aligned}
   \end{equation}
    From \eqref{eq:S*_trees_simple} and the fact that $d^p(x,\parent(x)) \leq d^p(x,\rootT(\calT))$ we see that $\bnu\in\calS^*$ and by plugging $\bnu$ into equation \eqref{eq:tree_bound_on_max} we can conclude that $\scalprod{\bm{G},\bnu}$ attains the upper bound in
    \eqref{eq:tree_bound_on_max}. \\
    As the last step of our proof, we verify that the limit in \eqref{eq:tree_bound_on_max} exists.
    Therefore, we rewrite condition \eqref{eq:entropy} in terms of the edges and recall that $x_0 = \rootT(\calT)$.
	\begin{equation}
	\sum_{x \in \calX} d_\calT(x,x_0)^p \sqrt{r_x} \geq \sum_{x\in \calX} \sum_{x' \in \children(x)} d_\calT(x,\parent(x))^p \sqrt{r_{x'}}.
	\label{eq:tree_entropy}
	\end{equation}
	The first moment of the limiting distribution can be bounded in the following way:
	\begin{equation*}
	\begin{aligned}
		&\mean \left[\sum_{ x \in \calX\setminus\{\rootT(\calT)\}} |(S_\calT \bm{G})_x| d_\calT(x,\parent(x))^p \right] \\
		&\qquad \leq \sum_{x \in \calX} d_\calT(x,\parent(x))^p \sqrt{(S_\calT r)_x(1-(S_\calT r)_x)} \\
		& \qquad \leq \sum_{x\in \calX} \sum_{x' \in \children(x)} d_\calT(x,\parent(x))^p \sqrt{r_{x'}} \\
		& \qquad< \infty
	\end{aligned}
	\end{equation*}
	due to Hölder's inequality and \eqref{eq:tree_entropy}. This bound shows that the limit in \eqref{eq:tree_bound_on_max} is almost surely finite and hence, concludes the proof.

\end{document}